\newtheorem{theorem}{Theorem}
\newtheorem{proposition}[theorem]{Proposition}
\newtheorem{corollary}[theorem]{Corollary}
\newtheorem{lemma}[theorem]{Lemma}
\newtheorem{problem}[theorem]{Problem}
\newtheorem{conjecture}[theorem]{Conjecture}
\newtheorem{remark}[theorem]{Remark}
\tikzstyle{vertex}=[fill=black, draw=black, shape=circle, thick, scale=0.75]
\tikzstyle{blue_clique}=[fill=white, draw=blue, shape=circle, ultra thick]
\tikzstyle{clique}=[fill=white, draw=black, shape=circle, ultra thick]
\tikzstyle{red_vertex}=[fill=red, draw=red, shape=circle, scale=0.75, thick]
\tikzstyle{edge}=[-, fill={rgb,255: red,0; green,128; blue,128}, ultra thick]
\tikzstyle{dir_edge}=[->, thick]
\tikzstyle{blue_edge}=[-, draw=blue, ultra thick]
\tikzstyle{red_edge}=[-, draw=red, ultra thick]
\tikzstyle{box}=[-, fill=gray, fill opacity=0.2]
\tikzstyle{dashed_edge}=[-, dashed, thick]
\tikzstyle{dashed_red_edge}=[-, draw=red, dashed, thick]
\tikzstyle{dashed_blue_edge}=[-, draw=blue, dashed, thick]
\tikzstyle{dashed_edge_green}=[-, draw=green, dashed, thick]
\tikzstyle{green_edge}=[-, draw=green, ultra thick]
 \tikzset{every picture/.style={font issue=\footnotesize},
          font issue/.style={execute at begin picture={#1\selectfont}}
         }
\title{On zero-sum Ramsey numbers modulo 3}
\author{\begin{tabular}{cc}Yair Caro & Xandru Mifsud \\ University of Haifa-Oranim & University of Oxford \\ \href{mailto:yacaro@kvgeva.org.il}{\small yacaro@kvgeva.org.il} & \href{mailto:xandru.mifsud@cs.ox.ac.uk}{\small xandru.mifsud@cs.ox.ac.uk}\end{tabular}}
\date{}
\begin{document}


\maketitle

\abstract{
We start with a systematic study of the zero-sum Ramsey numbers. For a graph $G$ with $0 \ (\!\!\!\!\mod 3)$ edges, the zero-sum Ramsey number is defined as the smallest positive integer $R(G, \mathbb{Z}_3)$ such that for every $n \geq  R(G, \mathbb{Z}_3)$ and every edge-colouring $f$ of $K_n$ using $\mathbb{Z}_3$, there is a zero-sum copy of $G$ in $K_n$ coloured by $f$, that is: $\sum_{e \in E(G)} f(e) \equiv 0 \ (\!\!\!\!\mod 3)$.

Only sporadic results are known for these Ramsey numbers, and we discover many new ones. In particular we prove that for every forest $F$ on $n$ vertices and with $0 \ (\!\!\!\!\mod 3)$ edges, $R(F, \mathbb{Z}_3) \leq  n+2$, and this bound is tight if all the vertices of $F$ have degrees $1 \ (\!\!\!\!\mod 3)$. We also determine exact values of $R(T, \mathbb{Z}_3)$ for infinite families of trees.
}

\vspace{1em}

\noindent\textbf{Keywords:} Zero-sum Ramsey numbers, tree packing.

\noindent\textbf{MSC:} 05C15, 05C35, 05C55, 05D10.

\section{Introduction}

Given a graph $G$ and a positive integer $k$, a basic problem in Ramsey graph theory is the determination, or getting good approximations for, the smallest integer $R(G; k) = R(\underbrace{G, \dots, G}_{k \ \text{times}})$ such that every $k$ edge colouring of the complete graph with at least $R(G; k)$ vertices has a monochromatic copy of $G$. A major variant of this is the determination or estimation of the smallest integer $R(K_n, K_m)$ such that every bi-chromatic edge colouring of the complete graph with at least $R(n, m)$ vertices has a copy of $K_n$ in one colour, or a copy of $K_m$ in the other colour. 

Much efforts have also been invested in studying the Ramsey numbers of trees, with one of the main conjectures in this area being the following one, due to Burr and Erd\H{o}s.

\begin{conjecture}[Burr-Erd\H{o}s Conjecture \cite{BurrErdosConj}]
	Let $T$ be a tree on $n$ vertices. Then $R(T, T) \leq 2n - 2$ when $n$ is even, and $R(T, T) \leq 2n - 3$ otherwise.
\end{conjecture}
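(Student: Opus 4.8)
\emph{A caveat first.} The statement above is the Burr--Erd\H{o}s conjecture on Ramsey numbers of trees, which to the best of my knowledge is still open in full generality; what follows is the line of attack behind the known partial results (notably the bounded-maximum-degree case) and the one I would try to push further. The natural first step is a \emph{structural dichotomy} on $T$: fix a small $\varepsilon > 0$ and separate the case in which $T$ contains a bare path (a path whose internal vertices all have degree $2$ in $T$) of length at least $\varepsilon n$ from the complementary case, in which a short argument forces $T$ to have linearly many leaves and hence to be ``spider-like'': a bounded-size core with many short pendant paths attached. The two regimes call for entirely different embedding schemes, and I would treat them separately before reconciling the boundary.

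\emph{Path-like $T$.} Put $N = 2n-2$ (respectively $2n-3$ for odd $n$) and $2$-colour $K_N$. The majority colour class $H$ has at least $\tfrac12\binom{N}{2}$ edges, so its average degree is at least $N/2 \approx n-1$; and a clean greedy/DFS argument shows that any $N$-vertex graph of \emph{minimum} degree at least $n-1$ already contains every tree on $n$ vertices. The real work is to pass from average degree $n-1$ in $H$ to a subgraph with both large minimum degree and enough connectivity to thread the long bare path, e.g.\ by extracting a long path in $H$ via an Erd\H{o}s--Gallai type bound and then hanging the (small) remainder of $T$ around it; crucially, when $H$ is far from having large minimum degree the \emph{other} colour class is correspondingly dense, and this trade-off is exactly what the factor $2$ in $2n-2$ pays for.

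\emph{Spider-like $T$.} First I would find a vertex $v$ whose monochromatic degree in some colour is large, making $v$ the centre of a big monochromatic star, and then hang the short pendant paths of $T$ off the leaves of that star, using that $K_N$ restricted to the still-unused vertices remains complete. The delicacy is that the pendant structures compete for vertices, so I would run this iteratively --- repeatedly placing the longest remaining pendant path into the largest available monochromatic neighbourhood --- and control it with a potential/deficiency function.

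\emph{The main obstacle.} The genuine difficulty is not either case alone but: (i) obtaining the bounds \emph{exactly} $2n-2$ and $2n-3$ instead of $(2+o(1))n$, which forces a stability analysis around the extremal colourings (roughly, two disjoint monochromatic cliques in one colour, everything else the other); and (ii) the intermediate trees that are neither genuinely path-like nor genuinely spider-like, where one must interpolate the two schemes. For bounded-degree $T$ the regularity method of Haxell, \L uczak and Tingley handles (ii) and much of (i); getting past bounded degree --- the open part --- seems to need a new absorption-type device that places the high-degree vertices of $T$ first and then completes the embedding greedily.
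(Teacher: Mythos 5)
You have correctly recognised that this statement is the Burr--Erd\H{o}s conjecture, which is \emph{open}: the paper does not prove it, but merely cites it as motivation (noting only Zhao's weaker result that $R(T,T)\le 2n-2$ for $n$ sufficiently large). So there is no proof in the paper against which to compare yours, and your own caveat is the honest and correct assessment of the situation.

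That said, what you have written is a research programme, not a proof, and the gaps are exactly where you locate them. Concretely: (i) the ``clean greedy/DFS argument'' embeds every $n$-vertex tree into a graph of \emph{minimum} degree $n-1$, but the majority colour class only has \emph{average} degree about $n-1$, and the standard passage from average to minimum degree (repeatedly deleting low-degree vertices) loses a constant factor in the number of vertices, which is fatal when the target bound is exactly $2n-2$ rather than $(2+o(1))n$; the claimed ``trade-off'' with the other colour class is precisely the unproved heart of the conjecture. (ii) The spider-like case as described does not control the competition for vertices among pendant paths beyond an appeal to an unspecified potential function, and for trees of unbounded maximum degree no such argument is known. (iii) The dichotomy itself (long bare path versus linearly many leaves) is standard, but the intermediate regime is only handled in the literature under a bounded-degree hypothesis via the regularity method, which you acknowledge. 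None of this is a criticism of your understanding --- it is an accurate map of why the conjecture remains open --- but it should not be mistaken for a proof, and the paper itself claims none.
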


Alternatively, this conjecture could be restated as follows: For a tree $T$ on $n$ vertices, $R(T, T) \leq R(K_{1, n-1}, K_{1, n-1})$. Only in 2011 was a slightly weaker form of this conjecture proved by Zhao \cite{Zhao}, who showed that if $n$ is sufficiently large and $T$ is a tree on $n$ vertices then $R(T, T) \leq 2n - 2$. For a comprehensive reference on Ramsey numbers, see the ongoing survey by Radziszowski \cite{Radziszowski}, in particular Section 5 covering many results on trees. 

The basic paradigm of Ramsey graph theory is therefore concerned with the appearance of monochromatic `substructures' in ($k$-)edge-colourings of `big' structures. By contrast, \textit{zero-sum} Ramsey graph theory, started by Bialostocki and Dierker \cite{BialostockiDierker} in 1989, is concerned with colouring the edges of complete graphs (or other host graphs) with the elements of a given group (mostly the cyclic group of order $k$ denoted $\mathbb{Z}_k$), and the appearance of zero-sum substructures (instead of monochromatic). 

The main source of motivation was to obtain a graph-theoretic version of the celebrated Erd\H{o}s-Ginzburg-Ziv theorem, stating that every sequence of $2n - 1$ integers contains a subsequence of length $n$ the sum of whose elements is $0 \ (\!\!\!\!\mod n)$.

Formally, let $G$ be a graph with $e(G)$ edges and $k \geq 2$ a positive integer such that $k \vert e(G)$. We denote by $R(G, \mathbb{Z}_k)$ the minimum integer such that for $n \geq R(G, \mathbb{Z}_k)$ and any $f \colon E(K_n) \rightarrow \mathbb{Z}_k$, there is a copy of $G$, called a \textit{zero-sum copy of $G$}, such that $\sum_{e \in E(G)} f(e) \equiv 0 \ (\!\!\!\!\mod k)$. Henceforth, whenever speaking about $R(G, \mathbb{Z}_k)$ we shall assume that $k \vert e(G)$. For a reference concerning zero-sum Ramsey numbers, see the survey in \cite{CaroZS}. Naturally, this change of focus to zero-sum embeddings from monochromatic ones requires the use of algebraic tools such as the Erd\H{o}s-Ginzburg-Ziv theorem, the Chevalley-Warning theorem, the Baker-Schmidt theorem, and Davenport constants. 

The last years have seen zero-sum problems flourishing in many settings, including zero-sum problems over $\mathbb{Z}$ under certain labelling restrictions \cite{alvarado, Campbell, carohansberg, Caro2019, Caro2016, frick2023topological, Mohr2022, PARDEY2023108, domingo2023deep}. Among the main results in this area is the complete determination of $R(G, \mathbb{Z}_2)$ for all graphs, first in \cite{CARO1994205} and then reproved using distinct methods in \cite{CARO19991} and \cite{WILSON2014490}.

\begin{theorem}[\cite{CARO1994205}]
	Let $G$ be a simple graph on $n$ vertices and an even number of edges. Then,
	\[
		R(G, \mathbb{Z}_2) = 
		\begin{cases}
			n + 2,& \text{if } G \text{ is the complete graph} \\
			n + 1,& \text{if } G \text{ is the union of two complete graphs} \\
			& \text{or } G \text{ is not } K_n \text{ and has all vertices of odd degree}\\
			n,& \text{otherwise.}
		\end{cases}
	\]
\end{theorem}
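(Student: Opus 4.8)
I would work over $\mathbb{F}_2$, identifying a colouring $f\colon E(K_m)\to\mathbb{Z}_2$ with the spanning subgraph $H$ of colour-$1$ edges, so that a copy $G'$ of $G$ is zero-sum exactly when $|E(G')\cap E(H)|$ is even. The backbone is an elementary duality: every colouring of $K_m$ admits a zero-sum copy of $G$ if and only if there is a family $\mathcal{F}$ of copies of $G$ in $K_m$ with $|\mathcal{F}|$ odd and with every edge of $K_m$ lying in an even number of members of $\mathcal{F}$ --- such a family is exactly an odd-weight linear dependence among the indicator vectors $\chi_{G'}\in\mathbb{F}_2^{E(K_m)}$, and its absence is precisely the statement that the functional sending each $\chi_{G'}$ to $1$ extends linearly to $\mathbb{F}_2^{E(K_m)}$, i.e.\ that a ``bad'' colouring with no zero-sum copy exists. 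Since an odd family survives adding an isolated vertex, its existence is monotone in $m$, so everything reduces to locating, for each $G$, the least $m$ admitting one; the trivial lower bound $R(G,\mathbb{Z}_2)\ge n$ needs no comment.

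For the lower bounds I use two colourings. Let $H_\triangle$ be the colouring whose colour-$1$ edges form a triangle. For a spanning copy $G'=\pi(G)$ one checks that $|E(G')\cap E(H_\triangle)|$ equals the number of edges of $G$ induced on the three vertices of $G$ placed on the triangle; hence $H_\triangle$ is bad iff every triple of vertices of $G$ spans an odd number of edges, and a short structural argument identifies these $G$ as precisely $K_n$ and the disjoint unions of two complete graphs. This gives $R\ge n+1$ for those $G$; moreover when $G=K_n$ the colouring $H_\triangle$ stays bad in $K_{n+1}$ (an $n$-subset omitting a vertex $r$ misses $3-\deg_{H_\triangle}(r)\in\{1,3\}$ colour-$1$ edges), so $R(K_n,\mathbb{Z}_2)\ge n+2$. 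Secondly, if every vertex of $G$ has odd degree, the ``star'' colouring $H$ (all colour-$1$ edges at a fixed vertex $v_0$, equivalently the coboundary of $\{v_0\}$) satisfies $|E(G')\cap E(H)|=\deg_G(\pi^{-1}(v_0))\equiv 1$ for every copy, so $R\ge n+1$.

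The key upper bound is $R(G,\mathbb{Z}_2)\le n+1$ for every $G$ other than $K_n$ and the edgeless graph, which I would prove by a vertex-transposition argument in $K_{n+1}$. Such a $G$ has a non-twin pair $v_1,v_2$, i.e.\ the set $N'=(N_G(v_1)\triangle N_G(v_2))\setminus\{v_1,v_2\}$ is nonempty, so transposing $v_1,v_2$ inside a copy changes its weight by $\sum_{z\in A}\bigl(f(xz)+f(yz)\bigr)$, where $x,y$ are the images of $v_1,v_2$ and, as the copy varies, $A$ runs over all $|N'|$-subsets of the remaining $n-1$ vertices; assuming no zero-sum copy exists these sums all vanish, which forces $f(xz)+f(yz)$ to be independent of $z$ for every pair $x,y$, hence (by a cocycle argument on $K_{n+1}$) $f=\partial g+c\cdot\mathbf{1}$ for some $g$ and constant $c$. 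Then every copy has weight $\sum_{v}\deg_G(v)\,g(\pi v)+c\cdot e(G)\equiv\sum_{v:\,\deg_G(v)\text{ odd}}g(\pi v)$ since $e(G)$ is even, and letting $\pi$ vary shows this cannot be constantly $1$ --- a contradiction. Combined with the lower bounds this yields $R=n+1$ exactly when $G\ne K_n$ is a disjoint union of two complete graphs or has all degrees odd. For $G=K_n$ I instead exhibit the explicit odd family of all $n$-subsets of an $(n+2)$-set inside $K_{n+2}$: it has $\binom{n+2}{2}$ members, an odd number because $\binom{n+2}{2}-\binom{n}{2}=2n+1$ with $\binom{n}{2}$ even, and each edge lies in $\binom{n}{2}$ of them, an even number; so $R(K_n,\mathbb{Z}_2)=n+2$. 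Finally, for the remaining $G$ one wants $R=n$: if $G$ has a non-twin pair with $|N'|\le n-3$ (``non-stubborn'') and not all degrees odd, the same transposition argument run inside $K_n$ already forces $f=\partial g+c\cdot\mathbf{1}$ and hence a contradiction, giving $R=n$.

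The genuine obstacle is the residue of ``stubborn'' graphs --- those in which every pair $v_1,v_2$ has $|N'|\in\{0,n-2\}$ --- for which neither transposition range suffices. A twin-class analysis should show that, besides $K_n$, the stubborn graphs are exactly the disjoint unions of two complete graphs (already handled), the complete bipartite graphs, and the edgeless graph; so the only stubborn $G$ still needing an argument for $R=n$ are $\overline{K_n}$ (trivial: the unique edgeless copy is an odd family) and the complete bipartite graphs $K_{s,t}$ with $s,t$ not both odd (the both-odd case falls under the odd-degree lower bound). For $K_{s,t}$ the indicator of a copy is a coboundary $\partial\chi_A$ with $|A|=s$, so one needs $s$-subsets $A_1,\dots,A_k$, $k$ odd, summing to $\mathbf{0}$ or $\mathbf{1}$ in $\mathbb{F}_2^{V(K_n)}$: the $n$ cyclic intervals of the odd one of $s,t$ work when exactly one of $s,t$ is even, and a triple $A_1,A_2,A_1\triangle A_2$ with $|A_1\cap A_2|=s/2$ works when both are even. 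Carrying out this classification and the bipartite constructions, together with checking the low-order degeneracies where ``there is room to transpose'' fails (small $n$, vertices of degree $n-1$ or $n-2$, near-twin pairs), is the technical heart; the rest is bookkeeping around the triangle and star colourings.
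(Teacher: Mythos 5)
The paper states this result only as a quoted theorem from \cite{CARO1994205} and gives no proof of it, so there is no internal argument to compare against; your proposal has to be judged against the known proofs, and it is closest in spirit to the later linear-algebraic reproofs (the inclusion-matrix/$\mathbb{F}_2$-rank approach of \cite{WILSON2014490}) rather than to Caro's original argument. Your architecture is sound and the individual pieces check out: the duality between bad colourings and odd-size families covering every edge evenly is correct $\mathbb{F}_2$ linear algebra (and does give monotonicity in $m$); the triangle and star colourings give exactly the stated lower bounds, and the characterisation ``every triple spans an odd number of edges iff $G=K_n$ or $G=K_s\cup K_t$'' is true (pass to the complement, where every triple spans $0$ or $2$ edges, and note that an isolated vertex of the complement together with a complement-edge would span one edge); the $\binom{n+2}{2}$-family for $K_n$ works precisely because $\binom{n}{2}$ is even by hypothesis; and the transposition-plus-cocycle argument correctly forces $f=\partial\gamma+c\mathbf{1}$ and then a parity contradiction, since the set of odd-degree vertices has even cardinality.

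Two steps are asserted rather than proved, and you rightly flag them as the technical heart. First, the classification of ``stubborn'' graphs: this is genuinely needed and is correct --- for $n\ge 4$ twin-ness becomes transitive under your stubbornness hypothesis, the quotient on twin-classes cannot have three classes (the three conditions $e_{ij}+e_{ik}=1$ sum to $3\equiv 1$, a contradiction), and the two-class case splits into $K_{s,t}$ and $K_s\cup K_t$ --- but without it your ``otherwise $R=n$'' case has an uncovered residue, so it must actually be written out (together with $n=3$, where the only admissible graphs are $\overline{K_3}$ and $K_{1,2}$, both stubborn and both handled). Second, in the $K_{s,t}$ construction with $s,t$ both even, the triple $A_1,A_2,A_1\triangle A_2$ needs $|A_1\cup A_2|=3s/2\le s+t$, so you should take $s=\min(s,t)$; and the cocycle derivation $c_{xy}+c_{yw}=c_{xw}$ needs a fourth vertex, so it requires $n\ge 4$ (again fine after the $n=3$ check). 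With those details supplied the proof is complete; nothing in the outline is wrong.
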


This serves as motivation for the current paper, namely to start a more systematic exploration of $R(G, \mathbb{Z}_3)$, as the next most simple group. 

\subsection{Determining $R(G, \mathbb{Z}_3)$}

Throughout the years there has been a consistent effort concerning the determination of $R(G, \mathbb{Z}_3)$ \cite{NogaCaro, CaroZSCompleteG, CaroZS, CaroZSBinomCoeff, ChungGraham1, HarborthPiepmeyer, HarborthPiepmeyer2}, mostly concerning $R(K_n, \mathbb{Z}_3)$, which is still to this date not yet completely solved, leaving the following problem still open:

\begin{problem}
Determine for $n \equiv 	7 \ (\!\!\!\!\mod 9)$ the exact value of $R(K_n, \mathbb{Z}_3)$, which is either $n + 3$ or $n + 4$.
\end{problem}

The second problem we mention here is the following one:

\begin{problem}
Let $G$ be a graph $n$ vertices such that $3 \vert e(G)$. Is $R(G, \mathbb{Z}_3) \leq n + 8$ with equality if, and only if, $G = K_3$?	
\end{problem}

In this paper we shall mostly consider $R(T, \mathbb{Z}_3)$, where $T$ is a tree on $n \equiv 1 \ (\!\!\!\!\mod 3)$ vertices so that $3 \vert e(T)$. For small trees on 4 and 7 vertices, several cases have been solved throughout the years \cite{CaroZS}. Amongst the many open problems remaining concerning trees, we mention the following conjecture.

\begin{conjecture}[\cite{CaroRoditty}]\label{conj_tree_gen}
	Let $m \geq k \geq 2$ be integers such that $k \vert m$ and let $T_{m+1}$ be a tree on $n = m + 1$ vertices. If $G$ is a graph with minimum degree at least $n + k - 2$, then every $\mathbb{Z}_k$-edge-colouring of $G$ forces a zero-sum (modulo $k$) copy of $T_n$. 
\end{conjecture}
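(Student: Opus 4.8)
The plan is to build a local ``sum-adjustment engine'' from the Erd\H{o}s--Ginzburg--Ziv (EGZ) theorem and then propagate it along the tree. The engine is the following fixed-length refinement of EGZ: if $k \mid m$, then every sequence of at least $m + k - 1$ elements of $\mathbb{Z}_k$ contains a subsequence of length \emph{exactly} $m$ whose sum is $0$. This follows by iterating EGZ: writing $m = jk$, from $m + k - 1 = jk + (k-1)$ elements extract one zero-sum block of length $k$ (possible since $jk + k - 1 \geq 2k - 1$), leaving $(j-1)k + (k-1)$ elements, and recurse. Applied at a single vertex this already settles the conjecture for the star $T = K_{1,m}$: any vertex $c$ of $G$ has $\deg(c) \geq n + k - 2 = m + k - 1$ incident edges, whose colours form a sequence to which the refinement applies, yielding $m$ incident edges of total colour $0$, i.e.\ a zero-sum copy of $K_{1,m}$ centred at $c$. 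This also confirms that $n + k - 2$ is exactly the right bound to aim for.

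For a general tree $T$ (with $m = n-1$ edges, $k \mid m$) I would set up a reserved greedy embedding. Fix a leaf-peeling order $v_1, \dots, v_n$ in which every $v_i$ with $i \geq 2$ has a unique neighbour $v_{p(i)}$ with $p(i) < i$, and embed $T$ into $G$ vertex by vertex, always sending $v_i$ to an unused $G$-neighbour of $\phi(v_{p(i)})$. Because $\delta(G) \geq n + k - 2$ and at most $n - 2$ already-placed vertices can be neighbours of $\phi(v_{p(i)})$, there are always at least $k$ admissible images; more precisely, if a support vertex $u$ carries $t$ leaves placed last, then $\phi(u)$ has at least $t + k - 1$ free neighbours, exactly the pool size the EGZ refinement requires. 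The intended structure is then to embed everything except the leaves hanging off one carefully chosen \emph{absorbing} support vertex $u^{\ast}$, fixing a partial colour sum $S_0$, and to use the $\geq t^{\ast} + k - 1$ free neighbours of $\phi(u^{\ast})$ to realise the residual residue $-S_0$ among the $t^{\ast}$ leaf edges.

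The crux, and the step I expect to be the genuine obstacle, is that a prescribed target residue cannot in general be hit by a subsequence of \emph{fixed} length: if the free neighbourhood of $\phi(u^{\ast})$ is monochromatic in colour $a$, then every choice of $t^{\ast}$ leaf edges contributes the same value $t^{\ast}a$, and no adjustment is possible at $u^{\ast}$ alone. I would therefore split into cases according to how varied the colouring is near the support vertices. If some support vertex sees at least two colours among its free neighbours, one can enlarge its pool (using the slack between $n-1$ and $n+k-2$) and, combining an EGZ block with a single swap between the two colours, reach every residue and in particular $-S_0$. The genuinely hard case is when the colouring is highly structured, with each relevant neighbourhood essentially monochromatic, so that the local engine stalls.

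In that structured regime I would switch from local adjustment to a global count. For prime $k$ one can attempt to encode valid zero-sum embeddings as the common zeros of a polynomial system over $\mathbb{F}_k$ (the zero-sum condition being a single linear equation, with the adjacency and distinctness conditions being the expensive constraints) and invoke the Chevalley--Warning theorem to force a solution; for composite $k$ one would combine prime-power cases via the Chinese Remainder Theorem together with Davenport-constant estimates, or else reduce the structured colourings back to the already-understood complete-graph case (where a monochromatic $G$ makes every copy of $T$ zero-sum since $k \mid m$). Honestly, it is precisely the interface between the two regimes --- showing that \emph{no} colouring of $G$ can simultaneously defeat the local EGZ engine for every tree and evade the global count --- that keeps the full statement a conjecture; a complete proof most plausibly requires either a strong tree-packing decomposition of the neighbourhoods of the support vertices (so that many edge-disjoint partial copies feed independent EGZ blocks) or an algebraic input strictly stronger than EGZ.
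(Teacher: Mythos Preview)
The statement you are attempting is \emph{Conjecture}~\ref{conj_tree_gen}: it is presented in the paper as an open problem from \cite{CaroRoditty}, not as a theorem, and the paper supplies no proof of it. The paper merely observes that the conjecture would imply $R(T_n,\mathbb{Z}_k)\le n+k-1$ and uses this as motivation for the weaker results actually established (the bound $R(T,\mathbb{Z}_3)\le n+2$ for trees, obtained by entirely different means in Theorem~\ref{r_g_z3_upper}). So there is no ``paper's own proof'' to compare against.

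As for your proposal itself, you are candid that it is not a proof: you correctly isolate the genuine obstruction --- a support vertex whose free neighbourhood is monochromatic in colour $a$ contributes a fixed residue $t^\ast a$ regardless of which leaves you pick, so the local EGZ engine has no freedom --- and you then offer only heuristics (Chevalley--Warning, Davenport constants, tree-packing) for the structured regime, explicitly conceding that the interface between the two regimes ``keeps the full statement a conjecture.'' Even your two-colour case is not airtight: having two colours among the free neighbours of $\phi(u^\ast)$ lets a single swap change the leaf sum by $\pm(a-b)$, but for composite $k$ this need not generate all of $\mathbb{Z}_k$, and you have not argued that enough independent swaps survive after committing to an EGZ block. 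Your treatment of the star case via iterated EGZ is correct and is indeed the standard argument, but beyond that what you have written is an outline of where the difficulties lie rather than a resolution of them --- which is consistent with the statement's status as a conjecture.
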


It is easy to check that Conjecture \ref{conj_tree_gen} implies that $R(T_n, \mathbb{Z}_k) \leq n + k -1$ and in particular, relevant to our case, that $R(T_n, \mathbb{Z}_3) \leq n + 2$. This motivated the following further conjecture by Caro \cite{caro2019problem}. 

\begin{conjecture}[\cite{caro2019problem}] \label{conj_tree}
	Let $T$ be a tree on $n \equiv 1 \ (\!\!\!\!\mod 3)$ vertices. Then,
	\[
		R(T, \mathbb{Z}_3) = 
		\begin{cases}
			n + 2,& \text{if } T \text{ is the star } K_{1, n-1} \\
			n + 1,& \text{if } T \text{ has no vertex of degree } 0 \ (\!\!\!\!\!\!\mod 3)\\
			n,& \text{otherwise.}
		\end{cases}
	\]
\end{conjecture}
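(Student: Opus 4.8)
The plan is to pin down $R(T,\mathbb{Z}_3)$ in each of the three regimes by matching constructions against embedding arguments; the constructions are easy, and the embedding arguments are where the work lies. Throughout, $T$ is a tree on $n\equiv 1\pmod 3$ vertices.

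\emph{Lower bounds.} The bound $R(T,\mathbb{Z}_3)\ge n$ is immediate, since a tree on $n$ vertices does not embed into $K_{n-1}$. Suppose $T$ has no vertex of degree $\equiv 0\pmod 3$. Fix a vertex $v_0$ of $K_n$ and colour every edge at $v_0$ by $0$ and every other edge by $1$. A (necessarily spanning) copy $T'$ of $T$ places $v_0$ at a vertex of some degree $d\in\{\deg_T(x):x\in V(T)\}$, so its colour sum is $(n-1-d)\cdot 1\equiv -d\not\equiv 0\pmod 3$ (using $n\equiv 1$); hence $R(T,\mathbb{Z}_3)\ge n+1$. For the star, colour $K_{n+1}$ by giving a Hamilton cycle the colour $1$ and all remaining edges the colour $0$; each vertex $c$ meets exactly two cycle-edges, so a copy of $K_{1,n-1}$ centred at $c$, which omits exactly one incident edge $cv$, has sum $2-f(cv)\in\{1,2\}$, giving $R(K_{1,n-1},\mathbb{Z}_3)\ge n+2$.

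\emph{Upper bound: the $n+2$ bound.} It suffices to show that for $K_m$ (with $m$ the claimed value) the set $\Sigma:=\{\sum_{e\in E(T')}f(e):T'\cong T,\ T'\subseteq K_m\}\subseteq\mathbb{Z}_3$ contains $0$; so assume $\Sigma\subseteq\{1,2\}$. If $f$ is constant, every copy of $T$ has sum $(n-1)f\equiv 0$ (as $3\mid n-1$), a contradiction; so $f$ is non-constant. Using that the copies of $T$ in $K_m$ are, for $m$ large enough, richly connected by admissible single-edge swaps $T'\mapsto T'-e+e'$ (establishing this is the combinatorial heart of the ``tree-packing'' input), non-constancy of $f$ yields an admissible swap that changes the sum, forcing $\Sigma=\{1,2\}$ and, in particular, a $+1$-swap along a leaf-edge $e=xy$ of some $T'$. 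Removing $e$ and re-attaching the leaf at $x$ to any of the $1+(m-n)$ remaining vertices produces that many copies of $T$ with sums $\sigma(T'-e)+f(xz)$; for $m\ge n+2$ these are three colours, so unless they all lie in a fixed $2$-set we obtain a copy of sum $0$, and ruling out the remaining configuration is done by re-routing the embedding of $T-\ell$ (changing which vertices are free) and propagating the colour restrictions — with an Erd\H{o}s--Ginzburg--Ziv argument to handle vertices carrying several pendant leaves — until colour $0$ is realised.

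\emph{The refinements, and the main obstacle.} Lowering $m$ from $n+2$ to $n+1$, and then to $n$, exploits the degree hypotheses exactly where the two lower-bound colourings degenerate. When $T$ has a vertex $v$ with $\deg_T(v)=d\equiv 0\pmod 3$ and $T\ne K_{1,n-1}$ (so $T$ also has a second vertex of degree $\ge 2$), I would fix a copy $T'$ in $K_n$, look at the $d$ edges at the image $p$ of $v$, and use that their total is unconstrained modulo $3$ precisely because $d\equiv 0$: re-routing the $d$ pendant subtrees of $T$ hanging at $v$ among the vertices of $K_n$ should realise every value of that total, hence every residue of $\sigma(T')$. The main obstacle of the whole proof lives here: for $m=n$ there are no spare vertices, so one must argue entirely inside the class of spanning copies of $T$, simultaneously keeping the result isomorphic to $T$ and showing that the admissible edge-swaps generate all of $\mathbb{Z}_3$ in the sum; reconciling these two demands — rather than relying on a clean decomposition of $K_n$ into spanning copies of $T$, which need neither exist nor suffice — is the delicate point, and is presumably why the full statement is still only a conjecture.
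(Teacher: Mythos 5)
The statement you are asked about is a \emph{conjecture}, and the paper does not prove it either: it only establishes partial results (the $n+2$ upper bound for all non-star trees via Theorem~\ref{r_g_z3_upper}, the lower bounds of Lemma~\ref{r_g_z3_lower}, and exact values for trees admitting automorphism switchable pendants in Theorems~\ref{thm1} and~\ref{thm2}). Your proposal is likewise not a proof: every step that goes beyond what is already known is asserted rather than established. Concretely, the entire upper-bound argument rests on the claim that the copies of $T$ in $K_m$ are ``richly connected by admissible single-edge swaps'' so that non-constancy of $f$ forces a sum-changing swap, and on a subsequent ``propagating the colour restrictions\dots until colour $0$ is realised'' step; neither is proved, and the swap-connectivity claim is false as stated without further hypotheses (e.g.\ for $m=n$ and $T=K_{1,n-1}$ there are no admissible leaf-swaps at all, and for general spanning trees one must simultaneously preserve the isomorphism type, which is exactly the difficulty). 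Similarly, the claim that when $\deg_T(v)\equiv 0\pmod 3$ one can ``realise every value'' of the sum of the $d$ edges at the image of $v$ by re-routing pendant subtrees inside $K_n$ is precisely the open content of the $R(T,\mathbb{Z}_3)=n$ case; you acknowledge this yourself in the final sentence. A plan that defers its central lemmas to ``the combinatorial heart'' and then concedes that reconciling them ``is presumably why the full statement is still only a conjecture'' is an honest research outline, not a proof.

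For what it is worth, the parts of your proposal that are complete are correct and partially different from the paper's. Your lower-bound colouring for the no-vertex-of-degree-$0\pmod 3$ case is the complement of the one in Lemma~\ref{r_g_z3_lower}(ii) (colour $0$ at the special vertex, $1$ elsewhere, using $3\mid n-1$) and works equally well; your Hamilton-cycle colouring of $K_{n+1}$ showing $R(K_{1,n-1},\mathbb{Z}_3)\ge n+2$ is a genuinely different and rather clean construction compared with the two-special-vertices colouring of Lemma~\ref{r_g_z3_lower}(i) (which in any case does not apply to the star, whose centre has degree $\equiv 0\pmod 3$). Where the paper makes actual progress on the upper bounds, it does so not by a global swap/connectivity argument but by local case analysis: Theorem~\ref{r_g_z3_upper} finds one or two vertices with two distinctly coloured incident edges, embeds $G$ minus one or two well-separated leaves, and closes the sum with Cauchy--Davenport; Sections~3 and~4 handle the near-monochromatic (``restrictive'') colourings separately and use automorphism switchable pendants to generate two distinct weights. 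If you want to salvage your outline, the place to start is to prove your swap-connectivity claim for some restricted class of trees; as it stands it is the missing idea, not a routine verification.
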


The exact value of $R(K_{1, n-1}, \mathbb{Z}_3)$ is already known to be $n + 2$ \cite{CARO19921}.

In Section 2 we prove an upper-bound for $R(G, \mathbb{Z}_3) \leq n + 2$ using the notion of $2$-good graphs. In particular, we show that for every acyclic graph $F$ on $n$ vertices and $0 \ (\!\!\!\!\mod 3)$ edges, $R(F, \mathbb{Z}_3) \leq n+2$ and this bound is sharp if all vertices of $F$ have degree $1 \ (\!\!\!\!\mod 3)$.
  
In Section 3 we consider restrictive colourings, which are those colourings in which every vertex in $K_n$ is incident with at least $n-2$ edges with the same colour. We then prove a packing result for trees which are not a star in dense graphs.

In Section 4 we focus on Conjecture \ref{conj_tree}, by considering $R(T, \mathbb{Z}_3)$ for trees having induced subtrees with certain automorphism groups.  In conjunction with the tools developed in Section 3, we prove that for such trees $R(T, \mathbb{Z}_3) \leq n+1$ and in several cases we fully determine $R(T, \mathbb{Z}_3)$.

We conclude this paper by mentioning some further directions for research in Section 5.

\section{Upper-bounds on $R(G, \mathbb{Z}_3)$}

We begin by establishing the upper-bound $R(T, \mathbb{Z}_3) \leq n + 2$; we shall do so in a more general context. A graph $G$ (not necessarily connected) is said to be \textit{$2$-good} if it has at least $2$ vertices of degree 1 that are pairwise a distance at least three apart. Observe that every tree on $n \geq 3$ vertices which is not isomorphic to the star $K_{1, n-1}$ is $2$-good. Other examples of $2$-good graphs, distinct from a tree, are the complete graph $K_3$ with a leaf attached to every vertex, and the forest $t K_2$ for $t \geq 2$.

\begin{theorem} \label{r_g_z3_upper}
If $G$ be a $2$-good graph on $n$ vertices with $3 \vert e(G)$, then $R(G, \mathbb{Z}_3) \leq n + 2$.
\end{theorem}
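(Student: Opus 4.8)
The plan is to fix an arbitrary colouring $f \colon E(K_n) \to \mathbb{Z}_3$ for $n \geq n(G) + 2$ and exhibit a zero-sum copy of $G$ by a local-switching argument. Let $u$ and $v$ be two degree-$1$ vertices of $G$ that are pairwise at distance at least three apart, with neighbours $u'$ and $v'$ respectively; since the distance is at least three, $u', v'$ are distinct and non-adjacent in $G$, and moreover $u \notin \{v'\}$, $v \notin \{u'\}$. First I would embed the ``core'' $G' := G - \{u, v\}$, a graph on $n(G) - 2$ vertices, into $K_n$ — which is possible since $n \geq n(G)$ — in such a way that there remain at least two unused vertices of $K_n$; call them $a$ and $b$. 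The images of $u'$ and $v'$ under this embedding are some vertices $x$ and $y$ of $K_n$ (necessarily distinct, as $u' \neq v'$). Now the idea is that $u$ can be attached to $x$ via any of the unused vertices, and $v$ to $y$ via any of the unused vertices, and these choices are ``independent'' because $u'$ and $v'$ are far apart in $G$ — so extending along the $u$-side never interferes with the $v$-side.

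The key step is a counting/pigeonhole argument on $\mathbb{Z}_3$. Let $s$ be the sum of $f$ over the edges of the already-embedded copy of $G'$. We need to choose an unused vertex $p$ to play the role of $u$ (contributing edge-sum $f(xp)$) and an unused vertex $q \neq p$ to play the role of $v$ (contributing $f(yq)$), so that $s + f(xp) + f(yq) \equiv 0 \pmod 3$. So it suffices to show that among the unused vertices we can find $p \neq q$ with $f(xp) + f(yq) \equiv -s \pmod 3$. Here is where the count $n \geq n(G) + 2$ is used: we should arrange the embedding of $G'$ so that at least three vertices of $K_n$ remain unused, say $a, b, c$; then we have $\{f(xa), f(xb), f(xc)\}$ as candidate values for the $u$-edge and likewise for the $v$-edge (taking care to keep $p \ne q$ and to ensure neither $p$ nor $q$ coincides with an already-used vertex). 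A short case analysis over $\mathbb{Z}_3$: if the three values $f(xa), f(xb), f(xc)$ are not all equal, two of them differ and one can cover at least two residues on the $u$-side, leaving enough freedom on the $v$-side; if they are all equal to some $\alpha$, then symmetrically look at $f(ya), f(yb), f(yc)$, and if those are also all equal to some $\beta$ we get the fixed sum $\alpha + \beta$ — but then one reallocates one of $a,b,c$, or better, uses that we actually have three free vertices so can also swap which free vertex is ``spare'', and argues that a configuration where every cross-edge has a forced colour forces enough structure to still hit $-s$. I expect the cleanest framing is: with three free vertices $a,b,c$ and two ``ports'' $x,y$, the number of admissible (ordered) pairs $(p,q)$ with $p \ne q$ from $\{a,b,c\}$ is $6$, and we want one of the six values $f(xp)+f(yq)$ to equal $-s$; one shows these six values cannot all avoid a single residue class of $\mathbb{Z}_3$.

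The main obstacle is precisely handling the degenerate case where all the relevant cross-edge colours conspire to miss the target residue $-s$; this is where the ``distance at least three'' hypothesis must be fully exploited, ensuring that $u'$ and $v'$ are genuinely independent attachment points and that we have the flexibility of \emph{three} (not two) free vertices of $K_n$ to route through. One subtlety to check carefully: when $G$ is disconnected, ``embedding $G' = G - \{u,v\}$'' and then attaching $u, v$ must still yield a copy of $G$ as a subgraph of $K_n$ — but since $K_n$ is complete this is automatic as long as the vertex images are chosen injectively, so no adjacency constraint is violated. A second subtlety: I should confirm $u' \ne v'$ and that neither equals the other's leaf, which follows from $\mathrm{dist}_G(u,v) \ge 3$; if $u'=v'$ the two leaves would be at distance $2$. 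Once the pigeonhole over the six cross-sums is established, the theorem follows, and the bound $n+2$ is seen to be exactly what is needed to guarantee three free vertices after embedding the $(n-2)$-vertex core.
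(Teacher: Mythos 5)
There is a genuine gap, and it sits exactly where you flag the ``degenerate case.'' Your plan fixes an \emph{arbitrary} embedding of the core $G' = G - \{u,v\}$ and then tries to hit the residue $-s$ by varying only the two leaf attachments among three free vertices. The pigeonhole claim you lean on --- that the six sums $f(xp)+f(yq)$ over ordered pairs $p \neq q$ from $\{a,b,c\}$ cannot all avoid a single residue --- is false: if every edge from $x$ to a free vertex has colour $\alpha$ and every edge from $y$ to a free vertex has colour $\beta$, all six sums equal $\alpha+\beta$ and miss two residue classes. This situation actually occurs for colourings that are relevant here. Take the colouring of $K_{n+2}$ in which all edges at one special vertex $z_0$ get colour $1$ and all other edges get colour $0$. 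If your core embedding happens to place an internal vertex of $G'$ of degree $d \not\equiv 0 \pmod 3$ on $z_0$, then $s \equiv d \neq 0$ while every cross-edge from $x$ and $y$ to the free vertices has colour $0$, so no choice of $p,q$ (nor any reshuffling of which free vertex is ``spare'') repairs the sum. A zero-sum copy of $G$ does exist in this colouring (embed $G$ entirely off $z_0$), but finding it requires changing the core embedding itself, and your proposal has no mechanism for that beyond the unsubstantiated remark that the forced structure ``still hits $-s$.''

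The missing idea is that the core must be \emph{placed adaptively}: one first locates a vertex $h$ of $K_{n+2}$ with two (or three) distinctly coloured incident edges, builds the embedding so that the attachment point $u'$ lands on $h$ with the variably coloured edges left free for the leaf, and likewise finds a second, independent such vertex for $v'$; then the two available colour sets each have size at least two, and Cauchy--Davenport over $\mathbb{Z}_3$ gives all three residues for the pair of leaf edges. The complementary situation, where no such vertices can be found, forces the colouring to be (near-)monochromatic on a large clique, and there one finds a monochromatic copy of $G$ directly (using $3 \mid e(G)$). This is the paper's case split, and some version of it is unavoidable; without it, the ``fix the core, then pigeonhole on the leaves'' strategy breaks on exactly the constant-cross-colour configurations you identified but did not resolve.
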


\begin{proof}
Since $G$ is a $2$-good graph, there are two distinct vertices $u$ and $v$ adjacent to a leaf, say $w$ and $z$ respectively such that $\deg(w) = \deg(z) = 1$. Consider an edge-colouring $f$ of $K_{n+2}$ using $\mathbb{Z}_3$. For brevity, we shall abuse notation slightly and write $f(a, b)$ to mean $f(\{a, b\})$.

\textit{\textbf{Case 1:}} Suppose there is a vertex $h$ in $K_{n+2}$ incident with three edges ($\{h, a\}$, $\{h, b\}$ and $\{h, c\}$), each having distinct values under $f$. Consider the $(n-1)$-clique induced by the vertices distinct from $a, b$ and $c$, and embed of a copy of $G^\ast = G \backslash \{w\}$ in this clique with $u$ identified with $h$.

No matter what the weight of this embedding of $G^\ast$ is, call it $w(G^\ast)$, we can extend this embedding to a zero-sum embedding of $G$ in $K_{n+2}$ by adding the suitable edge from $\{h, a\}$, $\{h, b\}$ and $\{h, c\}$ having weight $-w(G^\ast)$, and identifying it with the edge $\{u, w\}$.

\textit{\textbf{Case 2:}} Suppose there is a vertex $h$ in $K_{n+2}$ incident with two edges ($\{h, a\}$ and $\{h, b\}$), each having distinct values under $f$. Consider the $(n-1)$-clique induced by the vertices distinct from $a, b$ and $h$.

\textit{\textbf{Case 2.1:}} Suppose that in the $(n-1)$-clique induced by the vertices distinct from $a, b$ and $h$, there is a vertex $q$ with two incident edges ($\{q, s\}$ and $\{q, t\}$), each having distinct values under $f$.

Then consider the $(n-2)$-clique induced by the vertices distinct from $a, b, s$ and $t$, and embed a copy of $G^{\ast\ast} = G \backslash \{w, z\}$ in this clique such that $u$ and $v$ are identified with $h$ and $q$ respectively.

By the simplest form of the Cauchy-Davenport theorem for $\mathbb{Z}_3$, since $A = \{f(h, a), f(h, b)\}$ and $B = \{f(q, s), f(q, t)\}$ are two sets containing two distinct elements of $\mathbb{Z}_3$, then $|A + B| = 3$.
 
Therefore, no matter what the weight of this embedding of $G^{\ast\ast}$ is, call it $w(G^{\ast\ast})$, we can extend this embedding to a zero-sum embedding of $G$ in $K_{n+2}$ by adding a suitable pair of edges whose total weight is $-w(G^{\ast\ast})$, namely: an from $\{h, a\}$ and $\{h, b\}$ identified with $\{u, w\}$, and a suitable edge from $\{q, s\}$ and $\{q, t\}$ identified with $\{v, z\}$.

\textit{\textbf{Case 2.2:}} Suppose that in the $(n-1)$-clique induced by the vertices distinct from $a, b$ and $h$, there is no vertex with two incident edges coloured distinctly; then clearly this clique must be monochromatic. 

Consider any vertex $x$ in this clique, and the path $ahb$ (in this order) in $K_{n+2}$ along with the edge $\{x, h\}$. If $f(x, h)$ has a value distinct from the values of $f(h, a)$ and $f(h, b)$, then $h$ is a vertex incident with three distinctly coloured edges, and by \textit{Case 1} we are done.

Hence, without loss of generality, we may assume that $f(x, h) = f(h, a)$. Since $f(h, a) \neq f(h, b)$, we also have that $f(x, h) \neq f(h, b)$. 

Consider the $(n-1)$-clique induced by the vertices distinct from $x, h$ and $b$. If in this clique there is a vertex $q$ as in \textit{Case 2.2}, then we are done. 

Otherwise consider the case when this clique is also monochromatic. This means that in the $n$-clique induced by the vertices distinct from $h$ and $b$, all the edges except $\{x, a\}$ may be monochromatically coloured. Clearly there is a monochromatic of $G$ in this $n$-clique, and we are done.

\textit{\textbf{Case 3}}: Lastly, suppose that there is no vertex $h$ in $K_{n+2}$ incident to two distinctly coloured edges. Then $K_{n+2}$ is monochromatic and any embedding of $G$ is zero-sum (modulo 3).
\end{proof} 

As an immediate consequence of the above, we get the following upper-bound for trees.

\begin{corollary}
	If $T$ is a tree on $n \equiv 1 \ (\!\!\!\!\mod 3)$ vertices, then $R(T, \mathbb{Z}_3) \leq n +2$.
\end{corollary}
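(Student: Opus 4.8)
The plan is to deduce the corollary from Theorem~\ref{r_g_z3_upper} in every case except one, and to cover that remaining case by a known result. First I would observe that $n \equiv 1 \pmod{3}$ forces $e(T) = n-1 \equiv 0 \pmod{3}$, so that $3 \mid e(T)$ and $R(T, \mathbb{Z}_3)$ is indeed well-defined. The degenerate case $n = 1$ is trivial (a single vertex is its own zero-sum copy), so one may assume $n \geq 4$.

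Next I would split on whether $T$ is a star. If $T \neq K_{1, n-1}$, then $T$ has diameter at least $3$: taking a longest path in $T$, its two endpoints are leaves lying at distance at least $3$, so $T$ is $2$-good, exactly as noted in the paragraph preceding Theorem~\ref{r_g_z3_upper}. Hence Theorem~\ref{r_g_z3_upper} applies directly and yields $R(T, \mathbb{Z}_3) \leq n+2$. If instead $T = K_{1, n-1}$, the star is \emph{not} $2$-good, since its only vertices of degree $1$ are pairwise at distance $2$; so Theorem~\ref{r_g_z3_upper} does not apply, and here I would simply invoke the known exact value $R(K_{1, n-1}, \mathbb{Z}_3) = n+2$ from~\cite{CARO19921}, which in particular gives the desired inequality. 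Combining the two cases gives $R(T, \mathbb{Z}_3) \leq n+2$ for every tree $T$ on $n \equiv 1 \pmod 3$ vertices.

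There is no real obstacle here — the statement is essentially a specialisation of Theorem~\ref{r_g_z3_upper}. The only point that needs care, and the reason the corollary is not a literal corollary of that theorem, is the star $K_{1,n-1}$: it is the unique tree (on $n \geq 3$ vertices) failing to be $2$-good, so it must be treated separately, and for it one relies on the cited sharp computation rather than on the argument of Theorem~\ref{r_g_z3_upper}.
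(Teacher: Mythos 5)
Your proof is correct and follows essentially the same route the paper intends: the paper states this as an immediate consequence of Theorem~\ref{r_g_z3_upper}, relying on the observation (made just before that theorem) that every non-star tree is $2$-good, together with the known value $R(K_{1,n-1}, \mathbb{Z}_3) = n+2$ cited in the introduction for the star case. Your explicit case split and the longest-path justification of $2$-goodness simply spell out what the paper leaves implicit.
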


We next note the following useful lemma giving us lower-bounds on $R(G, \mathbb{Z}_3)$ for graphs whose degree-sequence satisfies certain properties.

\begin{lemma} \label{r_g_z3_lower}
Let $G$ be a graph on $n$ vertices with $3 \vert e(G)$.

\begin{enumerate}[i.]
	\item If every vertex of $G$ has degree $1 \ (\!\!\!\!\mod 3)$, then $R(G, \mathbb{Z}_3) \geq n + 2$.
	\item If no vertex of $G$ has degree $0 \ (\!\!\!\!\mod 3)$, then $R(G, \mathbb{Z}_3) \geq n + 1$.
\end{enumerate}	
\end{lemma}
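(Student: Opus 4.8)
The plan is to exhibit, in each case, a specific $\mathbb{Z}_3$-edge-colouring of $K_{n+1}$ (respectively $K_n$) that admits no zero-sum copy of $G$, which immediately gives the claimed lower bound on $R(G,\mathbb{Z}_3)$. The natural colourings to try are those obtained from a vertex-weighting: fix a function $g\colon V(K_N)\to\mathbb{Z}_3$ and colour each edge $\{x,y\}$ by $f(x,y)=g(x)+g(y)$. For such a colouring, the weight of any subgraph $H$ depends only on the degrees: $\sum_{e\in E(H)}f(e)=\sum_{v\in V(H)}\deg_H(v)\,g(v)$. So if $G$ is embedded with its vertices mapped to vertices carrying weights $g_1,\dots,g_n$ and the vertex of $G$ sitting at the $i$-th image has degree $d_i$ in $G$, the weight of that copy is $\sum_i d_i g_i \pmod 3$.

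For part (i), every $d_i\equiv 1\pmod 3$, so the weight of any copy of $G$ sitting on vertices with weights $g_1,\dots,g_n$ is simply $\sum_i g_i \pmod 3$. Thus I want a weighting $g$ of $K_{n+1}$ such that no $n$-subset of vertices has weight sum $\equiv 0\pmod 3$. Taking $g\equiv 1$ on all $n+1$ vertices gives every $n$-subset weight $n\equiv 1\pmod 3$ (since $n\equiv 1\pmod 3$, as $3\mid e(G)=\frac12\sum d_i$ forces $n\equiv 0\pmod 3$... let me instead argue directly: $\sum_i g_i = n$ for every $n$-subset, and I just need $n\not\equiv 0$; if $n\equiv 0\pmod 3$ then the counting constraint $3\mid e(G)$ still leaves room, and one can instead use weights that are all $1$ except adjust — in any case one verifies $n\not\equiv 0\pmod 3$ follows from $3\mid e(G)$ together with all degrees $\equiv 1$, since $2e(G)=\sum d_i\equiv n$, forcing $n\equiv 0\pmod 3$, contradiction with needing $g\equiv 1$ to work, so instead pick weights summing appropriately). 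The clean fix: choose $g$ to take value $1$ on $n$ vertices and value $2$ on the last one; then an $n$-subset omitting the special vertex has sum $n$, and one omitting a non-special vertex has sum $n-1+2=n+1$; choosing which case to force non-zero according to the residue of $n$ shows one of these two colourings works. For part (ii), no $d_i\equiv 0\pmod 3$, so each $d_i\in\{1,2\}\bmod 3$; take the all-ones weighting on $K_n$, so every copy of $G$ has weight $\sum_i d_i = 2e(G)\equiv 0$ — that is zero-sum, which is the wrong direction. Instead I should perturb: the copy's weight as a function of the embedding, with $g$ not constant, is $\sum_i d_i g(\pi(i))$, and I want to choose $g$ on $n$ vertices so this is never $0$; since every vertex-degree is nonzero mod $3$, a generic choice of $g$ (e.g. one vertex gets weight $1$, rest get $0$) makes the weight equal $d_{i_0}g$-type expressions that one checks cannot vanish — the key point being that no copy of $G$ uses all $n$ vertices in a way that cancels.

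The main obstacle I anticipate is part (ii): unlike part (i), the degree coefficients are not all equal, so a single constant weighting does not suffice and I must argue that \emph{some} non-constant weighting of $K_n$ defeats \emph{every} embedding of $G$ simultaneously. The way to handle this is to note that an embedding of $G$ into $K_n$ (on $n$ vertices) is a bijection, so the multiset of degree-coefficients attached to the $n$ weight-values is fixed to be the degree sequence of $G$; I then need a vector $g\in\mathbb{Z}_3^n$ such that for every permutation $\sigma$ of the degree sequence, $\sum_i d_{\sigma(i)}g_i\not\equiv 0$. Equivalently, writing $a_0,a_1,a_2$ for the number of coordinates where $g_i=0,1,2$, the attainable sums are $\{\,b_1+2b_2 : b_1 \text{ coords of value }1 \text{ carry degree... }\}$ — this is a small combinatorial case analysis on the number of vertices of each degree-residue class in $G$, using that $3\mid e(G)$ i.e. $\sum d_i\equiv 0$, and that there is at least one vertex of degree $\not\equiv 0$. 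I expect the construction to be: set $g_i=0$ on all but one vertex $v$ with $\deg_G(v)\not\equiv 0\pmod 3$ — but since the embedding can place \emph{any} vertex of $G$ at $v$, I instead fix which vertex of $G$ goes where by a more careful choice, or argue that the set of achievable weights over all embeddings has size at most $2$ and misses $0$. Closing this case cleanly is where the real work lies; parts (i) and the reduction to vertex-weighted colourings are routine.
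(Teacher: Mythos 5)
Your reduction to vertex-weighted colourings $f(x,y)=g(x)+g(y)$ is a sound framework (and in fact both of the paper's extremal colourings are of essentially this form), but as written the proposal does not produce a valid colouring for part (i), and it declares part (ii) unfinished when the colouring you name already closes it. For part (i), the hypothesis that every degree is $1 \pmod 3$ together with $3\mid e(G)$ forces $n\equiv 0\pmod 3$ (since $\sum_v \deg(v)=2e(G)\equiv 0$ and $\sum_v\deg(v)\equiv n$). Consequently the all-ones weighting on $K_{n+1}$ gives every copy weight $n\equiv 0$, and your ``one special vertex of weight $2$'' weighting still admits a zero-sum copy, namely any copy omitting the special vertex, which has weight $n\equiv 0$. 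Your phrase ``choosing which case to force non-zero'' misreads the quantifier: the embedding, not you, chooses which vertex of $K_{n+1}$ to omit, so a single colouring must make \emph{both} residues nonzero, and neither of your two candidates does. The fix inside your own framework is to use \emph{two} special vertices, e.g.\ $g\equiv 1$ on two vertices $x,y$ and $g\equiv 0$ elsewhere: a copy omitting a non-special vertex has weight $\deg(w_x)+\deg(w_y)\equiv 2$, and a copy omitting $x$ or $y$ has weight $\equiv 1$. (The paper uses the nearly identical colouring ``colour $1$ on all edges meeting $\{x,y\}$, else $0$'', which is not exactly of the form $g(x)+g(y)$ because of the edge $\{x,y\}$, and does the same two-case check.)

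For part (ii) you have the right colouring in hand --- $g=1$ on a single vertex $v$ of $K_n$, $g=0$ elsewhere, i.e.\ colour $1$ on the star at $v$ and $0$ elsewhere --- but then you talk yourself into an unnecessary case analysis over permutations of the degree sequence. Since $G$ has exactly $n$ vertices, every embedding into $K_n$ is a bijection, so the weight of any copy is $\deg_G(w)$ where $w$ is whichever vertex of $G$ lands on $v$; by hypothesis this is never $0\pmod 3$, for \emph{every} choice of $w$, and you are done. This is precisely the paper's one-line argument. So the approach is the same as the paper's in spirit, but the write-up has a genuine gap in part (i) (no working colouring is exhibited) and an unclosed, though easily closable, part (ii).
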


\begin{proof}
We shall first consider the case no every vertex of $G$ has degree $0 \ (\!\!\!\!\mod 3)$. Consider $K_n$ and choose a vertex $v$. Colour all edges incident to $v$ using $1$, and colour the remaining edges using $0$. In any embedding of $G$, a vertex $w$ of $G$ must be located on $v$. So the weight of this embedding is just $\deg(w)$, but as $\deg(w)$ is not $0 \ (\!\!\!\!\mod 3)$, we have a colouring of $K_n$ such that no embedding of $G$ has zero-sum. Therefore $R(G, \mathbb{Z}_3) \geq n + 1$. 

We next prove the case when every vertex of $G$ has degree $1 \ (\!\!\!\!\mod 3)$. Consider $K_{n+1}$ and choose two vertices $x$ and $y$. Colour all the edges incident to $x$ and/or $y$ using $1$, and colour the remaining edges using $0$. 

Clearly, any embedding of $G$ in $K_{n+1}$ must use at least one of $x$ or $y$. In case an embedding of $G$ uses only $x$ (respectively $y$), then such an embedding has weight $1$ since $x$ has degree $1 \ (\!\!\!\!\mod 3)$ and all edges incident to it are coloured $1$ and the remainder is coloured $0$.

Otherwise consider any embedding of $G$ that uses both $x$ and $y$. If the edge $\{x, y\}$ is not used by the embedding, then the weight of the embedding must by $2 \ (\!\!\!\!\mod 3)$. Otherwise if the edge $\{x, y\}$ is used by the embedding, then the weight of the embedding must by $1 \ (\!\!\!\!\mod 3)$. 

Hence we have a colouring of $K_{n+1}$ such that no embedding of $G$ has zero-sum. Therefore, $R(G, \mathbb{Z}_3) \geq n + 2$.
\end{proof}

\begin{corollary} \label{2good_sharp}
	Let $G$ be a $2$-good graph on $n$ vertices with $3 \vert e(G)$ and all degrees $1 \ (\!\!\!\!\mod 3)$. Then $R(G, \mathbb{Z}_3) = n + 2$.
\end{corollary}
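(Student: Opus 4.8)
The plan is straightforward: Corollary~\ref{2good_sharp} is simply the conjunction of the upper bound from Theorem~\ref{r_g_z3_upper} and the lower bound from Lemma~\ref{r_g_z3_lower}(i), so the proof is a two-line appeal to results already in hand. First I would note that since $G$ is $2$-good with $3 \vert e(G)$, Theorem~\ref{r_g_z3_upper} applies directly and yields $R(G, \mathbb{Z}_3) \leq n + 2$. Then, since in addition every vertex of $G$ has degree $1 \pmod 3$ (and in particular no vertex has degree $0 \pmod 3$, so the hypothesis of part~(i) of Lemma~\ref{r_g_z3_lower} is met), Lemma~\ref{r_g_z3_lower}(i) gives $R(G, \mathbb{Z}_3) \geq n + 2$. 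Combining the two inequalities forces $R(G, \mathbb{Z}_3) = n + 2$.

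There is essentially no obstacle here — the only thing to check is that the two cited results are genuinely applicable, i.e.\ that "$2$-good" plus "$3 \vert e(G)$" (needed for Theorem~\ref{r_g_z3_upper}) and "all degrees $\equiv 1 \pmod 3$" plus "$3 \vert e(G)$" (needed for Lemma~\ref{r_g_z3_lower}(i)) are exactly the hypotheses assumed in the statement of the corollary, which they are. One might optionally remark, as a sanity check, that the two hypotheses are consistent: if all $n'$ vertices have degree $1 \pmod 3$ then $2e(G) = \sum \deg \equiv n' \pmod 3$, and $3 \vert e(G)$ forces $3 \vert n'$ on each component having at least one such vertex — but this is not needed for the proof and I would relegate it to a parenthetical at most. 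So the write-up is:

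\begin{proof}
Since $G$ is a $2$-good graph on $n$ vertices with $3 \vert e(G)$, Theorem~\ref{r_g_z3_upper} gives $R(G, \mathbb{Z}_3) \leq n + 2$. On the other hand, every vertex of $G$ has degree $1 \pmod 3$, so Lemma~\ref{r_g_z3_lower}(i) gives $R(G, \mathbb{Z}_3) \geq n + 2$. Hence $R(G, \mathbb{Z}_3) = n + 2$.
\end{proof}

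The main (indeed only) conceptual point is recognising that the corollary is purely a synthesis step; no new argument is required, and I would not expect to expand it beyond the three or four lines above.
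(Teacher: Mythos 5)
Your proposal is correct and is exactly the argument the paper intends: the corollary is the immediate conjunction of the upper bound from Theorem~\ref{r_g_z3_upper} and the lower bound from Lemma~\ref{r_g_z3_lower}(i), which is why the paper states it without a separate proof. Nothing further is needed.
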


We take a moment to note the following observations. Firstly note that no tree $T$ with $3 \vert e(T)$ and all vertex degrees $1 \ (\!\!\!\!\mod 3)$ exists, as a consequence of the hand-shaking lemma. However, while no such trees exist, such forests are very common, leading us to the following consequence.

\begin{corollary}
	If $F$ is an acyclic graph on $n$ vertices with $3 \vert e(F)$, then $R(F, \mathbb{Z}_3) \leq n + 2$, and this bound is sharp if all vertex degrees are $1 \ (\!\!\!\!\mod 3)$.
\end{corollary}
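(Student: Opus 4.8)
The plan is to reduce both assertions to results already in hand. For the upper bound I would begin by isolating which acyclic graphs fail to be $2$-good, since for every $2$-good forest the bound $R(F,\mathbb{Z}_3)\le n+2$ is immediate from Theorem~\ref{r_g_z3_upper}. The claim is that the only exceptions are the edgeless graph $nK_1$ and the graphs $K_{1,m}\cup sK_1$ with $s\ge 0$ — a single star together with some isolated vertices. Indeed, if $F$ has two distinct components each containing an edge, then a leaf in one and a leaf in the other have (infinite) distance $\ge 3$, so $F$ is $2$-good; and if $F$ has a unique nontrivial component $T$ which is not a star, then a longest path in $T$ has length $\ge 3$, so its endpoints are leaves of $T$ (hence of $F$) at distance $\ge 3$, and again $F$ is $2$-good. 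Hence every forest with $3\mid e(F)$ outside these two exceptional forms is $2$-good, and Theorem~\ref{r_g_z3_upper} applies.

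It then remains to treat the two exceptional forms directly. If $F=nK_1$ then every copy of $F$ has weight $0$, so $R(F,\mathbb{Z}_3)=n\le n+2$ trivially. If $F=K_{1,m}\cup sK_1$ with $3\mid m$, then $n=m+1+s$, and from the known value $R(K_{1,m},\mathbb{Z}_3)=m+3$ — i.e.\ the value of \cite{CARO19921} for the star on $m+1$ vertices — any $\mathbb{Z}_3$-edge-colouring of $K_N$ with $N\ge n+2$, and in particular $N\ge m+3$, contains a zero-sum copy of $K_{1,m}$ on $m+1$ vertices; the remaining $N-(m+1)\ge s+2$ vertices accommodate the $s$ isolated vertices without changing the weight, producing a zero-sum copy of $F$. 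Hence $R(F,\mathbb{Z}_3)\le n+2$ in every case.

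For the sharpness clause, assume every vertex of $F$ has degree $1\pmod 3$. Then $F$ has no isolated vertex, and $F$ is not a star because the centre of $K_{1,m}$ with $3\mid m$ has degree $m\equiv 0\pmod 3$; so by the classification above $F$ is $2$-good (either a non-star tree, or a forest with at least two nontrivial components). Corollary~\ref{2good_sharp} now applies verbatim and gives $R(F,\mathbb{Z}_3)=n+2$.

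The only step that is not pure bookkeeping is the structural observation that a non-star tree contains two leaves at pairwise distance at least $3$ (equivalently, has a path of length $\ge 3$); all the real content lies in Theorem~\ref{r_g_z3_upper}, Corollary~\ref{2good_sharp}, and the cited value of $R(K_{1,m},\mathbb{Z}_3)$. One point to remain alert to is that a forest with all degrees $1\pmod 3$ is never a tree — this fails by the hand-shaking relation, exactly as noted above — which is why it is the disconnected case of the classification that delivers $2$-goodness in the sharpness argument.
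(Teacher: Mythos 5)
Your proposal is correct and follows essentially the same route as the paper: classify the non-$2$-good forests as a star (possibly with isolated vertices) or the edgeless graph, apply Theorem~\ref{r_g_z3_upper} and Corollary~\ref{2good_sharp} to the $2$-good case, and handle the star-plus-isolated-vertices case via the known value of $R(K_{1,m},\mathbb{Z}_3)$. You are in fact slightly more explicit than the paper in justifying the classification and in verifying that the all-degrees-$1 \pmod 3$ hypothesis forces $F$ to be $2$-good.
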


\begin{proof}
Observe that there are three possibilities for $F$: $F$ is either $2$-good, a star $K_{1, n-1}$, or a star $K_{1, m-1}$ and $n - m$ isolated vertices for some $m < n$ such that $m \equiv 1 \ (\!\!\!\!\mod 3)$. 

If $F$ is $2$-good then we are done by Theorem \ref{r_g_z3_upper}, and if all the vertex degrees are $1 \ (\!\!\!\!\mod 3)$ this bound is sharp by Corollary \ref{2good_sharp}. If $F$ is the star $K_{1, n-1}$ then it is known that $R(F, \mathbb{Z}_3) = n+2$. Otherwise if $F$ is the star $K_{1, m-1}$ with $n - m$ isolated vertices, then $R(F, \mathbb{Z}_3) \leq n + 2$, namely by considering a zero-sum embedding of the star in an $(m+2)$-clique of $K_{n+2}$, and then mapping the isolated vertices to the remainder. 
\end{proof} 

\section{Restrictive edge-colourings of $K_n$}

An edge-colouring of $K_n$ is said to be \textit{restrictive} if every vertex has at least $n - 2$ incident edges coloured the same. For such a colouring, the uniquely coloured edge incident to a vertex (if it exists) is called a \textit{bad edge}. The following lemma characterises the restrictive colourings of $K_n$ when using at most three colours.

\begin{lemma} \label{bad_edge_removal}
Let $n \in \mathbb{N}$ such that $n \geq 4$ and consider a restrictive edge-colouring of $K_n$ using at most three colours. Then the coloured subgraph obtained after deleting the bad edges from the colouring is a monochromatic graph which is either:
\begin{enumerate}[i.]
	\item connected on $n$ vertices with minimum degree at least $n - 2$,
	\item or the union of $K_1$ and $K_{n-1}$. 
\end{enumerate}	
\end{lemma}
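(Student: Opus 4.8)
The plan is to follow the colour of each vertex around. Since $n\ge 4$, restrictiveness means that at every vertex $v$ there is a \emph{unique} colour $c(v)$ appearing on at least $n-2$ of the incident edges, and the bad edge at $v$ (when it exists) is exactly the one edge at $v$ whose colour differs from $c(v)$. The first step is the observation that the subgraph $H$ obtained by deleting all bad edges is ``colour-consistent'': if $\{u,v\}$ survives, then it is not the bad edge of $u$ nor of $v$, so it is coloured $c(u)$ and also $c(v)$; hence $c(u)=c(v)$ and this common value is the colour of $\{u,v\}$. Partitioning $V(K_n)$ into colour classes $C_1,C_2,C_3$ according to the majority colour, it follows that $H$ has no edge between distinct $C_i$, so $H=H[C_1]\sqcup H[C_2]\sqcup H[C_3]$ with each $H[C_i]$ monochromatic in colour $i$; in particular $H$ is monochromatic as soon as at most one class is non-empty.

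Next I would dispose of the case that exactly one class is non-empty, say $C_1=V$. Then every edge of $K_n$ not coloured $1$ is the unique minority edge at \emph{both} of its endpoints, so such edges form a matching $M$; conversely a colour-$1$ edge is never a bad edge, as $1$ is the majority colour at each of its endpoints. Hence $H=K_n\setminus M$. For $n\ge 4$ this is connected with minimum degree $\ge n-2$ (two non-adjacent vertices of $H$ are matched in $M$, hence both have degree exactly $n-2$, and then they share a common neighbour), so we land in conclusion (i).

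The main work is the case of at least two non-empty classes; take $|C_1|$ largest. For $v\in C_1$, all but at most one of the $|V\setminus C_1|$ edges from $v$ to the other classes are coloured $1$, and each such colour-$1$ edge $\{v,w\}$ is the unique bad edge of its endpoint $w$. If $|C_1|\ge 2$, then for two distinct $v,v'\in C_1$ the corresponding sets of such endpoints in $V\setminus C_1$ must be disjoint (a vertex has only one bad edge), forcing $|V\setminus C_1|\le 2$; if $|C_1|\ge 3$ the same disjointness for three vertices forces $|V\setminus C_1|=1$. In that situation, with $V\setminus C_1=\{v^\ast\}$: all $n-1$ edges at $v^\ast$ are bad (the $\ge n-2$ of them coloured $c(v^\ast)$ are bad at their $C_1$-endpoints, the remaining one at $v^\ast$ itself), so $v^\ast$ is isolated in $H$; and at least $n-2$ vertices of $C_1$ have used up their unique bad edge on $v^\ast$, so no edge inside $C_1$ can fail to be coloured $1$ (such an edge would have to be a bad edge at both its endpoints), giving $H[C_1]=K_{n-1}$ and conclusion (ii). The leftover possibility is that every colour class has size at most $2$, which pins $n$ down to $\{4,5,6\}$; these few configurations I would settle by direct inspection, checking in each case what the surviving graph can look like.

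I expect this last step — all colour classes of size $\le 2$ — to be the delicate point of the proof: it is precisely where the counting argument runs out of leverage, and it is the regime in which the structural dichotomy is most fragile, since one has to confront (and rule out, via the restrictiveness constraints) a $K_n$ whose surviving subgraph would be a union of monochromatic edges of two different colours.
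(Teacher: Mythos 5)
Your decomposition into majority-colour classes, together with the disjointness argument for bad edges, is a genuinely different and for most of the range cleaner route than the paper's proof, which proceeds by a local case analysis around a single vertex carrying a minority edge. Your argument is in fact complete for $n \geq 5$; indeed your own counting does better than you state, since if at least two classes are non-empty and the largest class $C_1$ satisfies $|C_1| \leq 2$, then your bound $|V \setminus C_1| \leq 2$ forces $n \leq 4$. So the leftover regime is not $n \in \{4,5,6\}$ but exactly $n = 4$ with class sizes $(2,2)$ or $(2,1,1)$.

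The gap is that you defer precisely this case to an unperformed ``direct inspection'', and that inspection cannot succeed: the lemma as stated is false for $n = 4$. Take $V(K_4) = \{a,b,c,d\}$ and colour $\{a,b\}, \{a,d\}, \{b,c\}$ blue and $\{c,d\}, \{a,c\}, \{b,d\}$ red. Every vertex sees exactly two edges of one colour and one of the other, so the colouring is restrictive, and the bad edges are the four cross edges $\{a,c\}, \{a,d\}, \{b,c\}, \{b,d\}$; deleting them leaves a blue $\{a,b\}$ together with a red $\{c,d\}$, a bichromatic $2K_2$ which is neither conclusion (i) nor (ii). (The paper's own proof also overlooks this configuration: in its Case 2.2 the deduction that some vertex acquires ``two red edges and two blue edges'' requires a third vertex in $V'$, i.e.\ $n \geq 5$. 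Since the lemma is only ever invoked for $n \geq 6$, nothing downstream is affected, but as a proof of the statement for all $n \geq 4$ your write-up --- like the paper's --- is incomplete, and in your case the missing step cannot be filled without strengthening the hypothesis to $n \geq 5$.)
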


\begin{proof}
Firstly, if every vertex has all incident edges coloured the same, then $K_n$ is monochromatic: Considering the star $K_{1,n-1}$ centred at a vertex $v$, which must be monochromatic. If $u$ is a leaf of this star, the star at $u$ must have the same colour, as otherwise $\{u, v\}$ is a `bad' edge for $u$. Hence $K_n$ must be monochromatic.

Otherwise, suppose some vertex $v$ has $n-2$ blue edges and $1$ red edge, say between $v$ and $u$. Let $V'$ be the remaining $n - 2$ vertices.

\textit{\textbf{Case 1:}} If all $n-1$ edges incident to $u$ are red, $V'$ must form a monochromatic red or blue clique (otherwise there is some vertex which has either all three colours incident to it, or two red edges and two blue edges). If the clique is red, all edges incident to $v$ are bad for some vertex, and deleting them leaves a red $K_1 \cup K_{n-1}$. If the clique is blue, all edges incident to $u$ are bad for some vertex, and removing them leaves a blue $K_1 \cup K_{n-1}$.

\textit{\textbf{Case 2:}} Suppose $u$ has a uniquely coloured edge.

\textit{\textbf{Case 2.1:}} If this edge is the red edge $\{u, v\}$, the remaining $n-2$ edges incident to $u$ are either all blue or all green. If blue, the bad edges among $V'$ must be red or green and form matchings (otherwise there is some vertex which has either all three colours incident to it, or two red edges and two green edges). Removing these along with $\{u, v\}$ leaves a blue graph on $n$ vertices with minimum degree $n-2$, forcing connectivity. If green, $V'$ induces a monochromatic blue or green clique (by an argument similar to \textit{Case 1}), and all the bad edges are incident to $u$, which when removed leaves us with a blue $K_1 \cup K_{n-1}$.

\textit{\textbf{Case 2.2:}} Suppose the uniquely coloured edge incident to $u$ is different from the red edge $\{u, v\}$. Then $u$ has $n-2$ red edges and one edge $\{u, w\}$, where $w \in V'$, which is either blue or green.

If $\{u, w\}$ is green, $w$ has a blue edge (via $v$) and a green edge (via $u$). All other edges incident to $w$ must be blue or green to avoid three colours at $w$. For any $x \in V'$ with $x \neq w$, $x$ has a blue edge (via $v$) and a red edge (via $u$), so $\{w, x\}$ must also be blue. Thus, all edges incident to $w$ except $\{u, w\}$ are blue, and removing the bad edges incident to $u$ leaves a blue $K_1 \cup K_{n-1}$.

If $\{u, w\}$ is blue, no vertex in $V'$ can have a green edge, as that would force three colours at some vertex. Moreover, $w$ can have at most one red edge. If $\{w, x\}$ is red for some $x \in V'$, all other vertices in $V'$ must have a red edge incident to $x$. This creates vertices with two red edges (via $u$ and $x$) and two blue edges (via $v$ and $w$), which cannot be the case. Thus, all edges incident to $w$ are blue, and removing the bad edges incident to $u$ leaves a blue $K_1 \cup K_{n-1}$. The result follows.
\end{proof}

The following sequence of lemmas establishes that for a restrictive colouring of $K_n$ using at most three colours, if $T$ is a tree on $n$ vertices which is not isomorphic to the star $K_{1, n-1}$, then $T$ is embeddable into the graph $G$ obtained from $K_n$ by removing all of the bad edges from the colouring of $K_n$. 

\begin{lemma} \label{tree_leaf_lemma}
Let $n \in \mathbb{N}$ such that $n \geq 6$. If $T$ is a tree on $n$ vertices which is not isomorphic to $K_{1, n-1}$, then there exists two leaves $u$ and $v$ such that $T \backslash \{u, v\}$ is not isomorphic to $K_{1, n-3}$. 
\end{lemma}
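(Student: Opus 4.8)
The plan is to argue by contradiction: suppose that for \emph{every} pair of leaves $u,v$ of $T$ one has $T\setminus\{u,v\}\cong K_{1,n-3}$, and show this forces $T$ to be a star, contradicting the hypothesis. Two preliminary remarks: $T$ has at least two leaves and is not $K_2$, so such pairs exist, and deleting two distinct leaves from a tree on at least three vertices always leaves a tree on $n-2$ vertices (were the two leaves adjacent, $T$ would be $K_2$). Now, since $K_{1,n-3}$ has a vertex of degree $n-3$ and deleting vertices cannot increase degrees, the hypothesis forces the maximum degree $\Delta(T)$ to satisfy $\Delta(T)\ge n-3$; and since $T$ is not a star, $\Delta(T)\le n-2$. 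Fix a vertex $c$ of maximum degree.

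If $\Delta(T)=n-2$, then since $|V(T)|=n$ the shape of $T$ is forced: $c$ has one neighbour $p$ of degree $2$ carrying a pendant leaf, and $n-3$ further neighbours (at least three of them, as $n\ge 6$), all leaves. Deleting two of the leaf-neighbours of $c$ drops $\deg(c)$ to $n-4$, which for $n\ge 6$ is the maximum degree of the resulting tree on $n-2$ vertices and is $<n-3$; so that tree is not $K_{1,n-3}$, a contradiction.

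The substantial case is $\Delta(T)=n-3$. Fix one pair $\{u_0,v_0\}$ and write $T'=T\setminus\{u_0,v_0\}\cong K_{1,n-3}$ with centre $c'$. Then $T$ is obtained from $T'$ by attaching $u_0$ and $v_0$ as two non-adjacent pendant vertices. Attaching both to $c'$ would make $T$ the star $K_{1,n-1}$; attaching exactly one to $c'$ would make $\deg_T(c')=n-2$, contradicting $\Delta(T)=n-3$; hence each of $u_0,v_0$ is attached to a leaf of $T'$ --- either both to the same leaf, or to two distinct leaves. In either case $c'$ still has a pendant leaf $q_1$ in $T$ (this is exactly where $n\ge 6$ is used: in the two-distinct-leaves shape $c'$ retains $n-5\ge 1$ pendant leaves). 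Deleting the \emph{mixed} pair $\{q_1,u_0\}$ lowers $\deg(c')$ to $n-4$, which is again the maximum degree of the resulting tree and is $<n-3$; so $T\setminus\{q_1,u_0\}\not\cong K_{1,n-3}$, the final contradiction.

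The delicate point is the choice of the ``second'' pair in the last case. The obvious candidate --- deleting two pendant leaves at $c'$ --- actually fails for the smallest instance $n=6$: there $T$ may be the double star, and removing its two pendant leaves at $c'$ leaves precisely $K_{1,3}=K_{1,n-3}$. Replacing it by the mixed pair $\{q_1,u_0\}$ repairs this, and one then only has to check that the post-deletion degree count $\deg(c')=n-4<n-3$, together with the fact that $n-4$ is genuinely the maximum degree (which needs $n-4\ge 2$), both hold uniformly for $n\ge 6$.
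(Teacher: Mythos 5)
Your proof is correct. Every step checks out: the contradiction hypothesis forces $\Delta(T)\ge n-3$ (a $K_{1,n-3}$ must appear after some deletion, and deletion cannot raise degrees), $T$ not being a star gives $\Delta(T)\le n-2$, and in each of the two resulting cases the shape of $T$ is completely determined, after which the exhibited pair of leaves (two leaf-neighbours of the high-degree vertex when $\Delta=n-2$; the mixed pair $\{q_1,u_0\}$ when $\Delta=n-3$) leaves a tree of maximum degree $n-4<n-3$, which cannot be $K_{1,n-3}$. The boundary case $n=6$ is handled correctly, and you rightly flag that the naive choice of deleting two pendant leaves at $c'$ fails there for the double star.

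This is a genuinely different route from the paper's. The paper argues directly: it takes $u,v$ to be the endpoints of a longest path (so $d(u,v)\ge 3$), and performs a case analysis on how many leaves hang off the penultimate vertices $w$ and $z$, in each case choosing two leaves to delete so that the surviving tree still has diameter at least $3$ and hence is not a star. Your argument instead runs by contradiction through the maximum degree: the assumption that every leaf-pair deletion yields $K_{1,n-3}$ pins $\Delta(T)$ to $\{n-3,n-2\}$ and thereby reconstructs $T$ up to a handful of explicit shapes, in each of which a violating pair is produced. The paper's proof is constructive (it names the witnesses without assuming the negation) and generalises naturally to statements about preserving diameter; yours trades that for a much more rigid structural classification of the hypothetical counterexample, which makes the case analysis shorter and each case essentially a one-line degree count. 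Both are valid; yours is arguably easier to verify line by line.
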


\begin{proof}
	Let $u$ and $v$ be leaves on the longest path in $T$. Since $T$ is not star, then $d(u, v) \geq 3$. Let $w$ and $z$ be the neighbours of $v$ and $u$, respectively, in the path from $u$ to $v$. If $w$ is adjacent to at least three leaves remove two of them and the diameter of the obtained tree remains at least $3$ hence is not a star. The same holds for $z$. Else, if both $w$ and $z$ are adjacent to two leaves, remove one leaf from each of them. The obtained tree has diameter at least $3$ and hence is not a star.
 
	If the total number of leaves incident to both $w$ and $z$ is three, then $d(u, v) \geq 4$ as otherwise $T$ has at most 5 vertices. Hence we can remove a leaf adjacent to $w$ and a leaf adjacent to $z$ and the obtained tree is not a star. Otherwise if the total number of leaves incident to $w$ and $z$ is $2$, then $d(u, v) \geq 4$ and either there is a vertex $x$ of degree at least 3 on the path between $w$ and $z$ which is either adjacent to a leaf $y$ or is the branching vertex of a subtree having another leaf $y$ which can be removed together with the leaf adjacent to $w$ to get the required tree. Else the diameter of $T$ is at least 5, otherwise $T$ is the path $P_5$ on 5 vertices, and we can remove the two leaves adjacent to $w$ and $z$ to obtain a tree with diameter at least 3 (which is not a star). 
\end{proof}

\begin{lemma} \label{tree_embedding_lemma}
	Let $n \in \mathbb{N}$ such that $n \geq 6$. Let $G$ be a graph on $n$ vertices with minimum degree at least $n - 2$. If $T$ is a tree on $n$ vertices not isomorphic to $K_{1, n-1}$, then $T$ embeds in $G$. 
\end{lemma}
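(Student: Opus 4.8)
The plan is to exploit that $G$ is simply $K_n$ with a matching removed, and to embed $T$ greedily while reserving two well-chosen leaves of $T$ to be placed last. \textbf{Setup:} since $\delta(G)\ge n-2$, the complement $\overline{G}$ has maximum degree at most $1$, i.e. the set $M$ of non-edges of $G$ is a matching. Because $T\not\cong K_{1,n-1}$, the tree $T$ has diameter at least $3$; fix a longest path $x_0x_1\cdots x_\ell$ in $T$, so $\ell\ge 3$, and set $a=x_0$, $b=x_1$, $c=x_{\ell-1}$, $d=x_\ell$. Then $a$ and $d$ are leaves of $T$, their unique neighbours in $T$ are $b$ and $c$ respectively, the four vertices $a,b,c,d$ are distinct, and $b\ne c$ (here $\ell\ge 3$ is used).

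\textbf{A tree order and the greedy phase.} Build an ordering $v_1,\dots,v_n$ of $V(T)$ by repeatedly deleting a leaf — deleting $a$ first, then $d$, then the remaining leaves in any order — and reversing the deletion order; thus $v_n=a$, $v_{n-1}=d$, and for every $i\ge 2$ the vertex $v_i$ has a unique neighbour $p(v_i)$ among $v_1,\dots,v_{i-1}$. The $n-1$ pairs $\{v_i,p(v_i)\}$, $2\le i\le n$, are exactly the edges of $T$; moreover $p(v_n)=b$, $p(v_{n-1})=c$, and $b,c\in\{v_1,\dots,v_{n-2}\}$. Now embed $v_1,\dots,v_{n-2}$ into $G$ one at a time: let $\phi(v_1)$ be arbitrary, and having placed $v_1,\dots,v_{i-1}$ with $i\le n-2$, let $\phi(v_i)$ be any vertex of $G$ distinct from the at most $i-1$ already-used vertices and adjacent to $\phi(p(v_i))$. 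Such a vertex exists because $\phi(p(v_i))$ has at most one non-neighbour in $G$ and $(i-1)+1\le n-2<n$.

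\textbf{Placing the last two leaves.} After the greedy phase exactly two vertices $y_1,y_2$ of $G$ remain unused; write $\beta=\phi(b)$ and $\gamma=\phi(c)$, which are distinct. The four pairs $\beta y_1,\beta y_2,\gamma y_1,\gamma y_2$ form a $K_{2,2}$, so at most two of them can belong to the matching $M$, and if exactly two do, they are $\{\beta y_1,\gamma y_2\}$ or $\{\beta y_2,\gamma y_1\}$. In every case at least one of the assignments $(a\mapsto y_1,\ d\mapsto y_2)$ or $(a\mapsto y_2,\ d\mapsto y_1)$ has $\phi(a)$ adjacent to $\beta$ and $\phi(d)$ adjacent to $\gamma$ in $G$; choosing it extends $\phi$ to a bijection $V(T)\to V(G)$ under which every edge $\{v_i,p(v_i)\}$ of $T$ maps to an edge of $G$. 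Hence $\phi$ is an embedding of $T$ into $G$.

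\textbf{Remarks on the obstacle and hypotheses.} The hypothesis $T\not\cong K_{1,n-1}$ enters exactly once, to supply two leaves with distinct neighbours; without it the statement is false (take $G=K_n$ minus a perfect matching). The only non-routine point is the last step, and there the sole content is that those four pairs cannot all avoid $G$ because $\overline G$ is a matching; everything else is a bookkeeping count. One could instead induct on $n$ via Lemma \ref{tree_leaf_lemma}, but re-attaching two deleted leaves that happen to share a neighbour requires essentially the same matching observation together with a more delicate choice of the two deleted host-vertices, so the direct argument above seems cleaner.
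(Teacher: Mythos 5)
Your proof is correct, and it takes a genuinely different route from the paper's. The paper proceeds by induction on $n$: it verifies the base case $n=6$ by exhaustive checking, splits on whether $G$ has a vertex of full degree, and in the case where $G$ is $K_n$ minus a perfect matching it invokes Lemma \ref{tree_leaf_lemma} to find two leaves of $T$ whose removal leaves a non-star, re-attaching them at the two non-adjacent host vertices. Your argument is a single direct greedy embedding: the observation that $\overline{G}$ is a matching makes the first $n-2$ placements automatic (at each step at most $i\le n-2<n$ vertices are forbidden), and the only content is the final parity-of-a-matching-in-$K_{2,2}$ step, which you verify correctly --- since each of $\beta,\gamma,y_1,y_2$ lies in at most one non-edge, whichever of the two assignments is blocked, the other is forced to be free. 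The hypotheses are all used correctly ($\ell\ge 3$ gives four distinct vertices $a,b,c,d$ with $b\ne c$, which is exactly what makes $\beta\ne\gamma$ and hence the $K_{2,2}$ argument go through). What your approach buys: it dispenses with the base-case verification, with Lemma \ref{tree_leaf_lemma} entirely, and with the induction, and as you implicitly note it actually proves the statement for all $n\ge 4$, not just $n\ge 6$. What it gives up is negligible here, since the paper only ever applies the lemma to host graphs of this specific form; your closing remark correctly identifies that the inductive route would need the same matching observation anyway when the two deleted leaves share a neighbour.
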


\begin{proof}
	We shall proceed by induction on $n$. For $n = 6$, there are four possible graphs with minimum degree at least $4$ and five trees not isomorphic to $K_{1, 6}$. It is easily checkable that each of these five trees can be embedded in each of these four graphs. Hence the base case holds. 
	
	Suppose the result holds for all $k \in \mathbb{N}$ such that $6 \leq k < n$. Let $G$ be a graph on $n$ vertices with minimum degree at least $n - 2$ and let $T$ be a tree on $n$ vertices which is not isomorphic to $K_{1, n-1}$. 
	
	Suppose that $G$ has a vertex $x$ of degree $n - 1$ and consider $G - x$. Then $G - x$ is a graph on $n - 1 \geq 6$ vertices (since $n \geq 7$) with minimum degree at least $n - 3$. Since $T$ is not isomorphic to $K_{1, n-1}$ then there exists a leaf $u$ in $T$ such that $T - u$ is not isomorphic to $K_{1, n-2}$. Then by the inductive hypothesis, $T - u$ embeds in $G - x$. Since $x$ is incident to all other vertices, mapping the embedding of $T - u$ in $G - x$ to $G$, we can get an embedding for $T$ in $G$ by identifying $u$ with $x$.
	
	Otherwise consider the case when all vertices in $G$ have degree $n - 2$. Then $G$ must be the complete graph with a perfect matching removed, and hence $n$ must be even. In particular, $n \geq 8$. Let $x$ and $y$ be two vertices in $G$ such that $x$ is not adjacent to $y$. Then $x$ and $y$ are both adjacent to all the vertices in $G \backslash \{x, y\}$. Now, $G \backslash \{x, y\}$ is a graph on $n - 2 \geq 6$ vertices with minimum degree at least $n - 4$. By Lemma \ref{tree_leaf_lemma}, there exists two leaves $u$ and $w$ in $T$ such that $T \backslash \{u, w\}$ is a subtree of $T$ on $n - 2$ vertices which is not isomorphic to $K_{1, n-3}$. Hence by the inductive hypothesis, $T \backslash \{u, w\}$ embeds in $G \backslash \{x, y\}$. Since $x$ and $y$ are incident to all other vertices, mapping the embedding of $T \backslash \{u, w\}$ in $G \backslash \{x, y\}$ to $G$, we can get an embedding for $T$ in $G$ by identifying $u$ and $w$ with $x$ and $y$. 
	
	Hence the inductive case holds and the result follows.
\end{proof}

\begin{corollary} \label{tree_embedding}
	Let $n \in \mathbb{N}$ such that $n \geq 6$. Let $G$ be a graph with a component on $n$ vertices with minimum degree at least $n - 2$. If $T$ is a tree on at most $n$ vertices not isomorphic to $K_{1, n-1}$, then $T$ embeds in $G$.
\end{corollary}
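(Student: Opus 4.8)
The plan is to reduce everything to Lemma~\ref{tree_embedding_lemma}. Write $H$ for the component of $G$ on $n$ vertices with minimum degree at least $n-2$; since $H$ is a connected component of $G$, the degree of each vertex of $H$ is the same whether computed in $H$ or in $G$, so it suffices to exhibit an embedding of $T$ into $H$. Put $m=|V(T)|\le n$, and split into the cases $m=n$ and $m\le n-1$.

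In the case $m=n$, the hypothesis that $T$ is not isomorphic to $K_{1,n-1}$ is precisely what Lemma~\ref{tree_embedding_lemma} requires, and $H$ is a graph on $n\ge 6$ vertices of minimum degree at least $n-2$; the lemma then yields an embedding $T\hookrightarrow H\subseteq G$, as desired.

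In the case $m\le n-1$, the condition ``$T\not\cong K_{1,n-1}$'' holds automatically, since $K_{1,n-1}$ has $n\neq m$ vertices, so it only remains to check that \emph{every} tree on $m\le n-1$ vertices embeds into $H$. This follows from the standard greedy tree-embedding argument, which uses nothing beyond $\delta(H)\ge n-2\ge m-1$: order the vertices of $T$ as $v_1,\dots,v_m$ so that each $v_i$ with $i\ge 2$ has exactly one neighbour $p_i$ in $\{v_1,\dots,v_{i-1}\}$ (possible because $T$ is connected), and place the vertices into $H$ one at a time in this order; when it is time to place $v_i$, the already-chosen image of $p_i$ has at least $n-2\ge i-1$ neighbours in $H$, of which at most $i-2$ are occupied, so a free neighbour is available and the embedding extends. (Alternatively, one may first enlarge $T$ to a tree $T'$ on exactly $n$ vertices that is not a star -- for instance by hanging a short path off one leaf of $T$ -- apply Lemma~\ref{tree_embedding_lemma} to $T'$ and $H$, and restrict the resulting embedding to $T$.)

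I expect no genuine obstacle here: all of the real content sits inside Lemma~\ref{tree_embedding_lemma}, and what is left is bookkeeping -- verifying that the ``not a star'' hypothesis is needed only when $|V(T)|=n$, and that for strictly smaller trees the minimum-degree bound alone already forces an embedding by a one-line greedy argument.
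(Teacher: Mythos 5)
Your proposal is correct and follows essentially the same route as the paper: the case $|V(T)|=n$ is handed to Lemma~\ref{tree_embedding_lemma}, and the case $|V(T)|\le n-1$ is settled by the standard fact that minimum degree at least $m-1$ suffices to embed any tree on $m$ vertices (which you make explicit via the greedy argument, where the paper simply cites it as well known). No gaps.
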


\begin{proof}
	If $T$ has $n$ vertices and is not $K_{1, n-1}$, this follows immediately from Lemma \ref{tree_embedding_lemma}. Otherwise, it follows from the well-known fact that every graph with minimum degree at least $n-2$ contains all trees on at most $n - 1$ vertices.
\end{proof}

As a consequence of the above, we have the following useful proposition for zero-sum embeddings of trees in restrictive colourings.

\begin{proposition} \label{zero_sum_restrictive_colouring_1}
	Let $n \in \mathbb{N}$ such that $n \geq 6$ and $n  \equiv 1 \ (\!\!\!\!\mod 3)$. Let $T$ be a tree on $n$ vertices, not isomorphic to $K_{1, n-1}$. If $T$ has a vertex of degree $0 \ (\!\!\!\!\mod 3)$, there is a zero-sum embedding of $T$ in every restrictive colouring of $K_n$ using $\mathbb{Z}_3$. 
	
	Otherwise if $T$ does not have a vertex of degree $0 \ (\!\!\!\!\mod 3)$, there is a zero-sum embedding of $T$ in every restrictive colouring of $K_{n+1}$ using $\mathbb{Z}_3$.
\end{proposition}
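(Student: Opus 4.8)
The plan rests on one arithmetic observation: since $n \equiv 1 \pmod 3$ we have $3 \mid e(T) = n-1$, so any copy of $T$ all of whose edges receive the same colour is automatically zero-sum; more generally, if the edges of a copy of $T$ split into one monochromatic set of size divisible by $3$ and a second monochromatic set, then the second set also has size divisible by $3$ and the copy is again zero-sum. I would therefore use Lemma \ref{bad_edge_removal} to strip a restrictive colouring down to its monochromatic ``core'' $G$, embed $T$ (or most of $T$) into that core via Corollary \ref{tree_embedding}, and then account for the few leftover edges.

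First I would treat a restrictive colouring of $K_n$ when $T$ has a vertex $w$ with $\deg(w) \equiv 0 \pmod 3$; note that $3 \leq \deg(w) \leq n-2$, since $T$ is a tree on $n$ vertices that is not the star. By Lemma \ref{bad_edge_removal}, deleting the bad edges leaves a monochromatic graph $G$ of colour $c$ that is either (i) connected on $n$ vertices with minimum degree at least $n-2$, or (ii) the disjoint union $K_1 \cup K_{n-1}$. In case (i), as $T \not\cong K_{1,n-1}$ and $n \geq 6$, Corollary \ref{tree_embedding} embeds $T$ into $G$; all $n-1$ edges of the copy have colour $c$, so its weight is $c(n-1) \equiv 0 \pmod 3$. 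In case (ii), let $v$ be the isolated vertex of $G$; restrictiveness of the colouring of $K_n$ guarantees that among the $n-1$ edges at $v$ at least $n-2$ share one colour $a$ and at most one has a different colour. I would place $w$ on $v$ and embed $T-w$ (a spanning forest of the monochromatic clique $K_{n-1}$) into that clique, choosing the embedding — here completeness of $K_{n-1}$ is used — so that the at-most-one ``off-colour'' neighbour of $v$ is not the image of a neighbour of $w$; this is possible exactly because $\deg(w) \leq n-2$. Then the $\deg(w)$ edges at $v$ all have colour $a$ and the other $e(T) - \deg(w)$ edges have colour $c$, and since $3$ divides both $\deg(w)$ and $n-1$ the copy is zero-sum.

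For the second statement I would run the same reduction on $K_{n+1}$: deleting bad edges leaves a monochromatic $G$ of colour $c$ that is either connected on $n+1$ vertices with minimum degree at least $n-1$, or $K_1 \cup K_n$. In the connected case $G$ has a component on $n+1 \geq 6$ vertices of minimum degree at least $(n+1)-2$, and $T$ is a tree on $n \leq n+1$ vertices not isomorphic to $K_{1,n}$, so Corollary \ref{tree_embedding} embeds $T$ into $G$ using only colour $c$, of weight $c(n-1) \equiv 0 \pmod 3$. In the case $G = K_1 \cup K_n$ one simply embeds $T$ into the monochromatic $K_n$ on the $n$ non-isolated vertices, again of weight $c(n-1) \equiv 0 \pmod 3$. (Note that the hypothesis that $T$ has no vertex of degree $0 \pmod 3$ is not actually needed here; restrictiveness alone suffices once the host is $K_{n+1}$.)

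The only step demanding care is case (ii) of the first statement: one must check that a vertex of degree $0 \pmod 3$ of $T$ can be seated on the isolated vertex of the core and that the few edges joining it to the surrounding clique can all be kept in the majority colour at that vertex. This is precisely where restrictiveness — bounding the number of off-colour edges at a vertex by one — and the assumption $T \not\cong K_{1,n-1}$ — which forces $\deg(w) \leq n-2$ — are used; everything else is a direct combination of Lemma \ref{bad_edge_removal}, Corollary \ref{tree_embedding}, and the divisibility $3 \mid n-1$.
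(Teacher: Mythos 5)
Your proposal is correct and follows essentially the same route as the paper: apply Lemma \ref{bad_edge_removal} to reduce to a monochromatic core, use Corollary \ref{tree_embedding} in the connected case, and in the $K_1 \cup K_{n-1}$ case seat the vertex of degree $0 \pmod 3$ on the isolated vertex while embedding the rest in the monochromatic clique so as to avoid the single off-colour edge. The divisibility bookkeeping ($3 \mid n-1$ and $3 \mid \deg(w)$) and the observation that the degree hypothesis is not needed once the host is $K_{n+1}$ both match the paper's argument.
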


\begin{proof}
	Consider a restrictive colouring of $K_n$ using $\mathbb{Z}_3$. By Lemma \ref{bad_edge_removal}, the monochromatic graph $G$ obtained by deleting the uniquely coloured edges incident to each vertex is either a connected graph on $n$ vertices with minimum degree at least $n - 2$, or $K_1 \cup K_{n-1}$.

    In the case that $G$ is connected on $n$ vertices with minimum degree at least $n-2$, since $T$ is not isomorphic to $K_{1, n-1}$ then by Corollary \ref{tree_embedding} we have that $T$ embeds in $G$, and since $G$ is a monochromatic subgraph of our colouring of $K_n$ it follows that there is a copy of $T$ in $K_n$ with weight $0$.

    Otherwise suppose $G$ is isomorphic to $K_1 \cup K_{n-1}$. Let $x$ be the isolated vertex in $G$. Then the colouring of $K_n$ is such that $K_n - x$ is monochromatically coloured $a$, whilst $x$ has $n - 2$ incident edges coloured $b$ and one incident edge coloured $c$, say to a neighbour $y$. Note that it is possible that $a = b$ or $a = c$, and that $b = c$, but not $a = b = c$.
    
    \begin{figure}[h!]
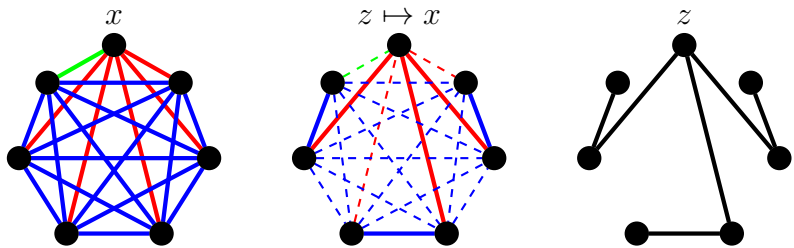

	\label{fig5}
	\ctikzfig{proposition_7_case_1}
	\vspace*{-4mm}
	\caption{(\textit{Case 1}) Illustration of a restrictive colouring of $K_7$ such that the removal of the uniquely coloured edges incident to each vertex is a (blue) $K_1 \cup K_6$, where $x$ is the isolated vertex. The tree $T$ obtained by coalescing three $P_3$ paths at a common vertex $z$ has a zero-sum embedding by associating the vertex $z$ of degree $3$ with the vertex $x$ in $K_n$.}	
	\end{figure}
    
    \textit{\textbf{Case 1:}} Suppose $z$ is a vertex in $T$ of degree $0 \ (\!\!\!\!\mod 3)$. Since $T$ is not isomorphic to $K_{1, n-1}$ then $z$ has at most $n - 2$ neighbours. 
    
    Consider an embedding of $T - z$ in $K_n - x$ such that none of the neighbours of $z$ in $T$ are identified with $y$ in $K_n$ (which is possible since $z$ has at most $n-2$ neighbours). Since $K_n - x$ is monochromatically coloured and $T - z$ has $0 \ (\!\!\!\!\mod 3)$ edges, then this embedding of $T - z$ in $K_n$ has weight $0$. Since the embedding avoids $y$, identifying $z$ with $x$ we can add the $0 \ (\!\!\!\!\mod 3)$ edges connecting $z$ to $T - z$, all of which are coloured the same and hence have total weight $0$. The result is a zero-sum embedding. 
    
    Combining everything together, this gives the first part of our result, namely that if $T$ as in the statement has a vertex of degree $0 \ (\!\!\!\!\mod 3)$ then $T$ has a zero-sum embedding in every restrictive colouring of $K_n$ using $\mathbb{Z}_3$.

    \textit{\textbf{Case 2:}} Otherwise suppose that $T$ does not have a vertex of degree $0 \ (\!\!\!\!\mod 3)$. We have already shown that there exists restrictive colourings of $K_n$ using $\mathbb{Z}_3$ such that $T$ has no zero-sum embedding. Hence consider a restrictive colouring of $K_{n+1}$ using $\mathbb{Z}_3$. Let $G$ be obtained as before; in the case that $G$ is a connected graph with minimum degree at least $n - 1$, by a similar argument as before there is a zero-sum embedding of $T$. Otherwise, the monochromatic graph $G$ is isomorphic to $K_1 \cup K_n$. Then $T$ embeds in the monochromatic component of size $n$, and we are done.
\end{proof}

\begin{remark} \label{rem1}
	Observe that for a tree $T$ with $3 \vert e(T)$ such that no vertex of $T$ has degree $0 \ (\!\!\!\!\mod 3)$, the colouring constructed in the proof of Lemma \ref{r_g_z3_lower} (ii) is a restrictive colouring forcing $R(T, \mathbb{Z}_3) \geq n + 1$.
\end{remark}

\section{Determining $R(T, \mathbb{Z}_3)$ for trees with automorphism switchable pendants}

Let $T$ be a tree and let $v$ be a vertex of $T$. We say that there is an \textit{automorphism switchable pendant (ASP) at $v$} if there exists a non-trivial automorphism $\sigma$ in $\text{Aut}(T-v)$ such that for some vertex $x$ in $T - v$, we have that $x$ is a neighbour of $v$ in $T$ but $\sigma(x)$ is not. 

If $T$ is a rooted tree with root $r$, we say that $T$ has an \textit{ASP from $r$} if there is an ASP at $r$ or at some vertex in a sub-tree from $r$. 

\begin{figure}[h!]
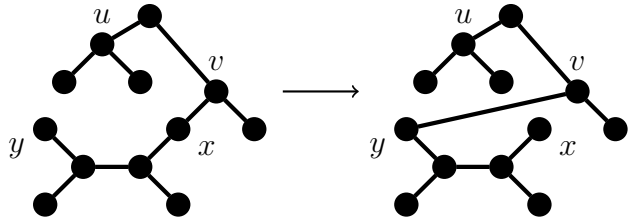

\label{fig1}
\ctikzfig{automorphism_switchable}
\vspace*{-5mm}
\caption{The double-star in $T - v$ with leaf $x$ is an example of an ASP at $v$, namely since the double-star has an automorphism mapping $x$ to another leaf $y$, where $x$ is a neighbour of $v$ in $T$ but $y$ is not. On the other hand, the vertex $u$ has no ASPs.}	
\end{figure}

\begin{lemma} \label{k_weight_lemma}
	Let $n \in \mathbb{N}$ and consider a colouring of $K_n$ using $\mathbb{Z}_3$. Suppose that there exists a vertex $v$ in $K_n$ which has at least two distinctly coloured incident edges. Let $T$ be a tree on $n$ vertices rooted at a vertex $r$.
	
	If there exists an ASP in $T$ from $r$, then there exists two embeddings of $T$ in $K_n$ having distinct weight, with the vertex $r$ in $T$ identified with the vertex $v$ in $K_n$.
\end{lemma}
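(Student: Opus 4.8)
The plan is to start from an arbitrary embedding $\phi$ of $T$ into $K_n$ with $\phi(r)=v$ — which, since $K_n$ is complete, is nothing but an arbitrary bijection $V(T)\to V(K_n)$ with $\phi(r)=v$ — and to manufacture a second such embedding $\phi'$ whose weight $\sum_{e\in E(T)}f(\phi(e))$ differs from that of $\phi$, by re-embedding a suitable pendant piece of $T$ through the automorphism that witnesses the ASP. The key point is that re-embedding a piece through one of its automorphisms does not disturb the colour-multiset of the edges internal to that piece, so the weight can change only through the edges joining the piece to the rest of $T$; these edges all meet a single vertex, the ASP condition forces this vertex to be met by a genuinely different set of images after the switch, and the colour variation at $v$ (or at an auxiliary vertex) lets us make the net change nonzero.

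First I would isolate the arithmetic core: if $M$ is a non-constant multiset over $\mathbb{Z}_3$ with $|M|\ge 2k$, then one can pick $2k$ elements of $M$ and split them into two blocks of size $k$ with unequal sums. This is quick: choose $m_1\ne m_2$ in $M$, put both among the $2k$ chosen elements, split the remaining $2k-2$ arbitrarily, and place $m_1,m_2$ in opposite blocks; if the block sums coincide, swap $m_1$ and $m_2$ — if they still coincide, subtracting the two relations gives $2(m_1-m_2)=0$ in $\mathbb{Z}_3$, i.e.\ $m_1=m_2$, a contradiction. Next I would unwind the hypothesis: rooting $T$ at $r$, an ASP from $r$ provides a vertex $u$ and a non-trivial automorphism $\sigma$ of $T-u$ (fixing, when $u\ne r$, the component containing $r$, hence acting as an automorphism of the descendant-subtree $T_u$ with $u$ deleted) for which the set $C$ of neighbours of $u$ on the descendant side satisfies $\sigma(C)\ne C$. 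Put $A=C\setminus\sigma(C)$ and $B=\sigma(C)\setminus C$: disjoint, nonempty, of common size $k\ge 1$, contained in $V(T-r)$ if $u=r$ and in $V(T_u)\setminus\{u\}$ otherwise, so $2k\le n-1$ (resp.\ $2k\le n-2$). I would also note that $r$ is not adjacent to every other vertex — otherwise $T=K_{1,n-1}$ with $r$ at its centre, $T-r$ is edgeless, and no automorphism of $T-r$ can witness an ASP — hence $\deg_T(r)\le n-2$.

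The construction itself splits into cases. If $u=r$: using the claim on the non-constant multiset $\{f(v,w):w\ne v\}$ (of size $n-1\ge 2k$), choose the bijection $\phi$ with $\phi(r)=v$ so that $\phi$ sends $B$ and $A$ onto two blocks of $k$ vertices whose $f(v,\cdot)$-sums differ, then set $\phi'$ to agree with $\phi$ at $r$ and to equal $\phi\circ\sigma$ on $T-r$; since $\sigma\in\mathrm{Aut}(T-r)$, the difference in weights is exactly $\sum_{z\in B}f(v,\phi(z))-\sum_{z\in A}f(v,\phi(z))\ne 0$. If $u\ne r$: if some vertex $v^\ast\ne v$ has non-constant colours on its edges to $V(K_n)\setminus\{v\}$, run the same argument with $(v^\ast,u)$ in place of $(v,r)$, re-embedding $T_u\setminus\{u\}$ via $\sigma$ while keeping $\phi(r)=v$ and $\phi(u)=v^\ast$ (possible since $K_n$ is complete), and using the claim on $\{f(v^\ast,w):w\ne v,v^\ast\}$ (of size $n-2\ge 2k$). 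Otherwise every vertex $\ne v$ has monochromatic edges to $V(K_n)\setminus\{v\}$, and comparing two such vertices shows that all edges avoiding $v$ carry a single colour $g_0$; then $\sum_{e\in E(T)}f(\phi(e))=g_0\bigl(e(T)-\deg_T(r)\bigr)+\sum_{c\in N_T(r)}f(v,\phi(c))$ for any $\phi$ with $\phi(r)=v$, and as $\phi(N_T(r))$ ranges over the $\deg_T(r)$-subsets of $V(K_n)\setminus\{v\}$, with $1\le\deg_T(r)\le n-2$, the second term takes at least two values because $\{f(v,w):w\ne v\}$ is non-constant.

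I expect the main difficulty to be the bookkeeping in the case $u\ne r$: pinning down exactly which subforest of $T$ is being re-embedded, checking that $\phi'$ is still a valid bijective embedding with $\phi'(r)=v$, and verifying that the only edges whose colours change are the $|C|$ edges at $\phi(u)$ — together with the degenerate colourings for which no auxiliary vertex $v^\ast$ is available, which is precisely why the last sub-case above is needed. Once the problem is reduced to the arithmetic claim, everything else is routine.
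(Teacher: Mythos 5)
Your proof is correct, and while its case structure parallels the paper's (colour variation at $v$ when the ASP is at the root; an auxiliary vertex with colour variation when it is not; a degenerate case where everything away from $v$ is monochromatic), the mechanism for producing the two weights is genuinely different. The paper fixes one embedding of the relevant subtree with $x$ and $\sigma(x)$ placed on the two distinctly coloured edges at the branch vertex, and obtains the second embedding by \emph{choosing which single pendant edge to include}, relying on the fact that exactly one of $x,\sigma(x)$ is a neighbour of the ASP vertex. You instead post-compose the embedding with $\sigma$ on the moved part and compute the weight change as $\sum_{B}-\sum_{A}$ over the symmetric difference $A=C\setminus\sigma(C)$, $B=\sigma(C)\setminus C$ of the neighbourhood; this handles an arbitrary witnessing automorphism (not just one that displaces a single neighbour) at the cost of your block-splitting claim, whose proof correctly uses that $2$ is invertible in $\mathbb{Z}_3$. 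Your degenerate sub-case (all of $K_n-v$ monochromatic, vary $\phi(N_T(r))$ using $1\le\deg_T(r)\le n-2$) is cleaner than the paper's corresponding Case 2. One shared caveat worth making explicit: you read the ASP at $u$ as witnessed by an automorphism acting on the \emph{descendant} side of $u$ (fixing the component of $T-u$ containing $r$). This matches the paper's implicit usage and is in fact necessary — under the literal definition, $P_3$ rooted at its centre would have an ``ASP from $r$'' at a leaf, yet every embedding with the centre at $v$ has the same weight — so you should state this reading as part of the hypothesis rather than derive it, since a general automorphism of the forest $T-u$ need not fix the component containing $r$.
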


\begin{proof}
	Let $z$ be the vertex in the rooted tree $T$ such that there is an ASP at $z$. Then there is a neighbour $x$ of $z$ in $T$ such that the sub-tree $T_x$ rooted at $x$ has a non-trivial automorphism $\sigma$ such that $\sigma(x)$ is not a neighbour of $z$ in $T$.
	
	Let $r_x$ be the neighbour of $r$ in $T$ such that the sub-tree $T_{r_x}$ at $r_x$ contains $x$. Note that if $r = z$ then $r_x = x$.
		
	Let $T'$ be the tree obtained by considering the sub-tree $T_{r_x}$ and adding the edge $\{r_x, r\}$. Let $k \in \mathbb{N}$ be the number of vertices in $T'$. Note that $k \geq 3$, namely since $T'$ contains the three distinct vertices $z, x$ and $\sigma(x)$. Let $v_1$ and $v_2$ be the two neighbours of $v$ in $K_n$ such that $\{v, v_1\}$ and $\{v, v_2\}$ are distinctly coloured.
	
	\textit{\textbf{Case 1:}} Suppose that $r = z$. Consider a subset $V'$ of $V(K_n)$ of size $k-1$ such that $v \notin V'$ but $v_1, v_2 \in V'$. Let $H$ be the $(k-1)$-clique induced by $V'$, and consider an embedding of $T_x$ in $H$, such that $x$ is identified with $v_1$ and $\sigma(x)$ is identified with $v_2$. Let $\alpha$ be weight of this embedding. This embedding can be extended to an embedding of $T'$ in $K_n$ in two ways: 
	\begin{enumerate}[i.]
		\item By adding the edge $\{v, v_1\}$ with $v$ identified with $z$, and since $v_1$ is identified with $x$ in $H$, then this is an embedding of $T'$ in $K_n$ of weight $\alpha + a$. 
		\item By adding the edge $\{v, v_2\}$ with $v$ is identified with $z$, and since $v_2$ is identified with $\sigma(x)$ in $H$, where $\sigma$ is an automorphism of $T_x$, then this is an embedding of $T'$ in $K_n$ of weight $\alpha + b$. 
	\end{enumerate}
	
	Note that $\sigma(x)$ not being a neighbour of $z$ ensures that only one of the edges $\{v, v_1\}$ and $\{v, v_2\}$ are required in the embedding.
	
	Since $a \neq b$ then these two embedding of $T'$ in $K_n$ have distinct weight.
	
	\begin{figure}[h!]
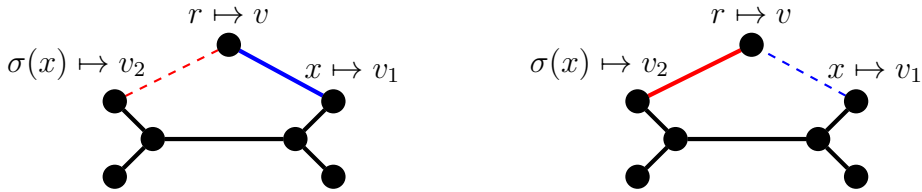

	\label{fig2}
	\ctikzfig{lemma_1_case_1}
	\vspace*{-2mm}
	\caption{(\textit{Case 1}) Illustration of two distinctly weighted embeddings of $T'$ in $K_n$, by considering a fixed embedding of $T_x$ on $V'$ with $x$ and $\sigma(x)$ identified with $v_1$ and $v_2$ respectively, and then adding either the edge $\{v, v_1\}$ or $\{v, v_2\}$. }	
	\end{figure}
	
	\textit{\textbf{Case 2:}} Suppose $r \neq z$ and that for every subset of $V(K_n)$ of size $k-1$ containing $v_1$ and $v_2$ but not $v$, the $(k-1)$-clique in $K_n - v$ induced by such a subset is monochromatic. Let $V'$ be one such subset.

	Consider two embeddings of $T_{r_x}$ in this clique, one with $r_x$ identified with $v_1$ and another with $r_x$ identified with $v_2$. Since the clique is monochromatic, then these two embeddings of $T_{r_x}$ have the same weight. Considering these embeddings in $K_n$, extending each respectively with the edges $\{v, v_1\}$ and $\{v, v_2\}$ where $r$ is identified with $v$, we get two distinctly weighted embeddings of $T'$ in $K_n$.
	
	\begin{figure}[htbp!]
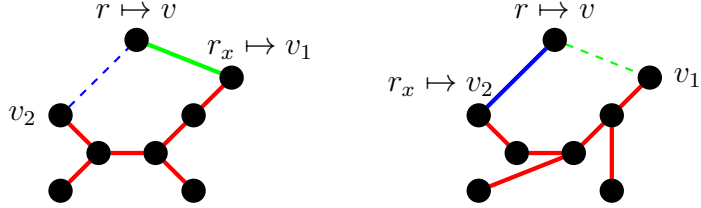

	\label{fig3}
	\ctikzfig{lemma_1_case_2}
	\vspace*{-3mm}
	\caption{(\textit{Case 2}) Illustration of two distinctly weighted embeddings of $T'$ in $K_n$, by considering two monochromatic embeddings of $T_{r_x}$ on $V'$, one with $r_x$ identified with $v_1$ and another with $r_x$ identified with $v_2$,   and then adding the edges $\{v, v_1\}$ and $\{v, v_2\}$ to each embedding respectively.}	
	\end{figure}
		
	\vspace*{-2mm}
		
	\textit{\textbf{Case 3:}} Otherwise, $r \neq z$ and there exists a subset $V'$ of $V(K_n)$ of size $k-1$ such that, in particular, $v \notin V'$ and it induces a non-monochromatic $(k-1)$-clique $H$ in $K_n - v$. Then there exists a vertex $u$ in $H$ which has at least two incident edges coloured distinctly. Let $u_1$ and $u_2$ be the neighbours of $u$ in $H$ such that $\{u, u_1\}$ and $\{u, u_2\}$ have colours $a$ and $b$ respectively, where $a \neq b$. 
	
	Consider an embedding in $H$ of the forest obtained from removing the edge $\{z, x\}$ from $T_{r_x}$, such that $z$ is identified with $u$, $x$ is identified with $u_1$ and $\sigma(x)$ is identified with $u_2$. Let $\alpha$ be the weight of this embedding. We can extend this embedding in two ways: 
	\begin{enumerate}[i.]
		\item By adding the edge $\{u, u_1\}$, and since $u$ is identified with $z$ and $u_1$ is identified with $x$, this is an embedding of $T_{r_x}$ in $H$ of weight $\alpha + a$. 
		\item By adding the edge $\{u, u_2\}$, and since $u$ is identified with $z$ and $u_2$ is identified with $\sigma(x)$ where $\sigma$ is an automorphism of $T_x$, this is an embedding of $T_{r_x}$ in $H$ of weight $\alpha + b$. 
	\end{enumerate}
	
	\vspace*{-2mm}
	
	\begin{figure}[h!]
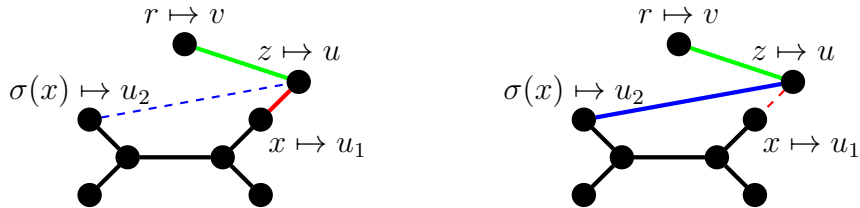

	\label{fig4}
	\ctikzfig{lemma_1_case_3}
	\vspace*{-4mm}
	\caption{(\textit{Case 3}) Illustration of two distinctly weighted embeddings of $T'$ in $K_n$, by considering a fixed embedding of $T_x$ on $V'$ with $z$, $x$ and $\sigma(x)$ identified with $u$, $u_1$ and $u_2$ respectively, and then adding either the edge $\{u, u_1\}$ or $\{u, u_2\}$, along with the edge $\{v, u\}$.}	
	\end{figure}
	
	Note that $\sigma(x)$ not being a neighbour of $z$ ensures that only one of the edges $\{u, u_1\}$ and $\{u, u_2\}$ are required in the embedding.
	
	Since $a \neq b$, these embedding of $T_{r_x}$ in $H$ have distinct weight. In both of these embeddings, $r_x$ is identified with the same vertex in $H$, say $w$ (possibly $u$ in the case that $r_x = z$). Then these embedding can both be extended to embeddings of $T'$ in $K_n$ by adding the edge $\{v, w\}$, such that the resulting embeddings have distinct weight.
	 	
 	In all the cases above, we can extend the two embeddings of $T'$ in the same way in order to embed the remainder of $T$, resulting in two distinctly weighted embeddings of $T$ in $K_n$ with $r$ in $T$ identified with $v$ in $K_n$.
\end{proof}

We are now in a position to state our main results concerning $R(T, \mathbb{Z}_3)$, Theorems \ref{thm1} and \ref{thm2}, before proving each one respectively. 

\begin{theorem} \label{thm1}
	Let $n \in \mathbb{N}$ such that $n  \equiv 1 \ (\!\!\!\!\mod 3)$. Let $T$ be a tree on $n$ vertices. If there exists a non-leaf vertex $v$ in $T$ such that in $T$ rooted at $v$ there is an ASP from $v$, then $R(T, \mathbb{Z}_3) \leq n + 1$ with equality if $T$ does not have a vertex of degree $0 \ (\!\!\!\!\mod 3)$.
\end{theorem}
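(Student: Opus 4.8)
\emph{Proof plan.} The lower bound is immediate: when $T$ has no vertex of degree $\equiv 0\pmod 3$, Lemma~\ref{r_g_z3_lower}(ii) gives $R(T,\mathbb{Z}_3)\ge n+1$, which is the equality asserted in the theorem, so everything reduces to the upper bound $R(T,\mathbb{Z}_3)\le n+1$. Fix a colouring $f\colon E(K_{n+1})\to\mathbb{Z}_3$ and root $T$ at the prescribed non-leaf vertex $v$, so that there is an ASP from $v$ (this incidentally forces $T\not\cong K_{1,n-1}$: a star has no ASP from its centre). Since $e(T)=n-1\equiv 0\pmod 3$, every monochromatic copy of $T$ in $K_{n+1}$ is zero-sum, and this settles the ``structured'' case: if $f$ is restrictive, apply Lemma~\ref{bad_edge_removal} to $K_{n+1}$, so that deleting the bad edges leaves a monochromatic graph that is either connected on $n+1$ vertices with minimum degree at least $(n+1)-2=n-1$, or a disjoint union $K_1\cup K_n$; in the former case a greedy embedding (or Corollary~\ref{tree_embedding}) places the $n$-vertex tree $T$ inside it, and in the latter $T$ embeds in the monochromatic $K_n$. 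Either way $f$ admits a monochromatic, hence zero-sum, copy of $T$.

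Assume henceforth that $f$ is \emph{not} restrictive, so some vertex $u$ of $K_{n+1}$ meets at most $n-2$ edges of each colour; then at $u$ some colour has multiplicity at least $2$ and a second colour also occurs, so $u$ still meets at least two colours after the deletion of any single vertex of $K_{n+1}$. Delete a suitable $d\ne u$ to form the $n$-clique $K'=K_{n+1}-d$ in which $u$ retains two colours (and all three, whenever $u$ met all three colours in $K_{n+1}$ and the deletion wipes out none of them). Applying Lemma~\ref{k_weight_lemma} inside $K'$ with pivot $u$ and $T$ rooted at $v$ produces two embeddings of $T$ into $K'$, each with $v\mapsto u$, of distinct weights; if either weight is $0$ we are done, so we may assume the two weights are $1$ and $2$. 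These embeddings are bijections onto $V(K')$, so they leave the vertex $d$ of $K_{n+1}$ unused, and it is precisely this spare vertex — available only because we work in $K_{n+1}$ rather than $K_n$ — that provides the extra room for the next step.

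The crux is to manufacture a third weight, which I would do with two independent ``knobs.'' The first is internal to Lemma~\ref{k_weight_lemma}: its construction toggles the colour of a single pivot edge between the two colours $\{a,b\}$ witnessing the pivot, and this is exactly what produced the two weights $w+a$ and $w+b$ with $\{w+a,w+b\}=\{1,2\}$. The second comes from the spare vertex $d$: outside the subtree $T_{r_x}$ on which the switch acts there is a leaf $\ell$ of $T$ (such a leaf exists since $v$, being a non-leaf, has at least one further subtree hanging off it), and re-routing $\ell$ onto $d$ — replacing the edge at the image of $\ell$'s neighbour — changes the weight by the difference $\delta$ of two edge-colours of $f$, and does so \emph{uniformly} over both Lemma~\ref{k_weight_lemma} embeddings, since they agree away from $T_{r_x}$. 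The two knobs are therefore independent, and the achievable weights form $w+\bigl(\{a,b\}+\{0,\delta\}\bigr)$. If $\delta\ne 0$ for some admissible choice, then by the Cauchy-Davenport theorem $|\{a,b\}+\{0,\delta\}|=3$ and we reach weight $0$, exactly as the Cauchy-Davenport step was used in \emph{Case 2.1} of the proof of Theorem~\ref{r_g_z3_upper}; and if \emph{every} admissible re-routing is weight-preserving, the colours on the relevant $n$-clique are pinned down tightly enough to force a monochromatic (hence zero-sum) copy of $T$. When $u$ already meets all three colours in $K'$ one may alternatively re-run the switch from a common base embedding with each of the three pivot colours and realise all of $\mathbb{Z}_3$ directly once the orbit of the switchable vertex is large enough, mirroring \emph{Case 1} of the proof of Theorem~\ref{r_g_z3_upper}.

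The main obstacle is exactly this: $\mathbb{Z}_3$ has three elements while Lemma~\ref{k_weight_lemma} is only a ``two-value'' device, so the substance of the argument is the case analysis that produces a third weight — for each local colour pattern at the non-restrictive vertex $u$, showing that either a third pivot colour, or the automorphism switch at a larger orbit, or a leaf re-routing through the $(n+1)$-st vertex moves the weight — together with the companion fact that whenever none of these moves helps, the colouring is forced to be monochromatic on an $n$-clique and thus falls back into the restrictive case already handled. Making this analysis airtight, and separately checking by direct inspection any small values $n\equiv 1\pmod 3$ (notably $n=4$) not covered by the general argument, is where the real work lies.
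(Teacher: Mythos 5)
Your overall skeleton matches the paper's: the lower bound via Lemma~\ref{r_g_z3_lower}(ii), the restrictive case via Lemma~\ref{bad_edge_removal} and monochromatic embedding, and, in the non-restrictive case, Lemma~\ref{k_weight_lemma} as one two-valued ``knob'' combined with a second two-valued knob through Cauchy--Davenport. The gap is in how you obtain the second knob. You first apply Lemma~\ref{k_weight_lemma} to all of $T$ inside an $n$-clique $K_{n+1}-d$ and then try to retrofit a second switch by re-routing some leaf $\ell$ onto the spare vertex $d$; but nothing guarantees that this re-routing changes the weight (the relevant colour difference $\delta$ may be $0$ for every admissible choice), and your fallback claim --- that if every re-routing is weight-preserving then the colouring is ``pinned down tightly enough to force a monochromatic copy of $T$'' --- is asserted, not proved, and is exactly the hard degenerate case. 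Likewise, the suggestion to ``re-run the switch with each of the three pivot colours'' exceeds what Lemma~\ref{k_weight_lemma} delivers: as stated it is strictly a two-value device. So the third weight is never actually secured.

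The paper closes this gap by a different order of operations. Non-restrictiveness of the colouring of $K_{n+1}$ means some vertex $u$ has every colour appearing at most $n-2$ times among its $n$ incident edges, so one can pick \emph{two disjoint pairs} $(w,x)$ and $(y,z)$ with $f(u,w)\neq f(u,x)$ and $f(u,y)\neq f(u,z)$. It then uses the hypothesis that $v$ is a non-leaf to arrange a leaf of $T$ adjacent to $v$ lying outside the ASP subtree, and \emph{dedicates} the pair $\{y,z\}$ to that leaf: the leaf is embedded on $y$ or on $z$, so its two possible weights $f(u,y)$ and $f(u,z)$ are distinct \emph{by construction}. Lemma~\ref{k_weight_lemma} is applied only to the subtree $T'$ containing the ASP, inside the clique on $V'\cup\{u\}$ with $w,x\in V'$ and $y,z$ excluded. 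The two knobs then each genuinely take two values and Cauchy--Davenport finishes. If you reorganise your argument to reserve the second distinctly-coloured pair at $u$ for the leaf at $v$ from the outset (rather than hoping a post hoc re-routing is weight-changing), your proof goes through and coincides with the paper's.
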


As an immediate consequence of Theorem \ref{thm1}, we have the following.

\begin{corollary} \label{corr_thm1}
	Every tree $T$ on $n \equiv 1 \ (\!\!\!\!\mod 3)$ vertices in which there is a leaf adjacent to a vertex of degree 2 has $R(T, \mathbb{Z}_3) \leq n + 1$, and if there is no vertex of degree $0 \ (\!\!\!\!\mod 3)$ then $R(T, \mathbb{Z}_3) = n +1$.
\end{corollary}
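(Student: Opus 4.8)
The plan is to derive this as an immediate application of Theorem~\ref{thm1}: it suffices to single out a non-leaf vertex $v$ of $T$ such that, with $T$ rooted at $v$, there is an ASP from $v$; then Theorem~\ref{thm1} gives $R(T,\mathbb{Z}_3)\le n+1$, and the matching lower bound in the case that $T$ has no vertex of degree $0\pmod 3$ is supplied by Lemma~\ref{r_g_z3_lower}(ii).

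Concretely, I would let $\ell$ be a leaf of $T$ whose unique neighbour $w$ satisfies $\deg_T(w)=2$, and let $p$ be the second neighbour of $w$. The first step is a short degenerate-case check: $p$ cannot itself be a leaf, since then $T$ would consist only of the path $\ell w p$ on three vertices, contradicting $n\equiv 1\pmod 3$ (and if $n=1$ the hypothesis is vacuous, as $K_1$ has no leaf). So $p$ is an admissible non-leaf root, and I will take $v=p$.

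The second step is to exhibit an ASP at $p$ (which is in particular an ASP from $p$). Deleting $p$ from $T$ detaches the edge $w\ell$: the component of $T-p$ containing $w$ is precisely $\{w,\ell\}$, which induces a copy of $K_2$, because removing $p$ leaves $w$ with only the neighbour $\ell$, while $\ell$ has no neighbour other than $w$. Hence the permutation $\sigma$ of $V(T-p)$ that transposes $w$ and $\ell$ and fixes every other vertex is a non-trivial automorphism of $T-p$. Taking the vertex $x=w$, we have that $x$ is a neighbour of $p$ in $T$, whereas $\sigma(x)=\ell$ is not (the only neighbour of $\ell$ being $w\ne p$); this is exactly the ASP condition at $p$.

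Putting the pieces together, Theorem~\ref{thm1} applied with $v=p$ gives $R(T,\mathbb{Z}_3)\le n+1$, and Lemma~\ref{r_g_z3_lower}(ii) upgrades this to equality whenever $T$ has no vertex of degree $0\pmod 3$. I do not anticipate a genuine obstacle here; the only points needing care are the bookkeeping in the degenerate cases (ruling out $p$ being a leaf, and observing that the leaf-adjacent-to-a-degree-$2$-vertex hypothesis fails for $n=1$) and checking that the swap $\sigma$ really is realised as an automorphism of the (possibly disconnected) graph $T-p$.
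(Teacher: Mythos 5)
Your argument is correct and is exactly the route the paper intends: the paper states this corollary as an immediate consequence of Theorem~\ref{thm1}, the point being precisely that a leaf $\ell$ attached to a degree-$2$ vertex $w$ yields, at the second neighbour $p$ of $w$, the $K_2$-component $\{w,\ell\}$ of $T-p$ whose swap is a non-trivial automorphism witnessing an ASP at $p$, with Lemma~\ref{r_g_z3_lower}(ii) supplying the matching lower bound. Your extra care with the degenerate cases ($p$ not a leaf since $n\not\equiv 0\pmod 3$, and the automorphism living on a disconnected forest) is sound and only makes explicit what the paper leaves implicit.
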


Two ASPs in a tree $T$ rooted at $r$ are said to be \textit{separated} if they are either ASPs at $r$, or ASPs at two vertices in distinct sub-trees from $r$. 

\begin{theorem} \label{thm2}
	Let $n \in \mathbb{N}$ such that $n  \equiv 1 \ (\!\!\!\!\mod 3)$. Let $T$ be a tree on $n$ vertices. If there exists a vertex $v$ in $T$ such that considering $T$ rooted at $v$ there are two separated ASPs from $v$, then \[
        R(T, \mathbb{Z}_3) = \begin{cases}
            n & \text{if there exists a vertex of degree} \ 0 \ (\!\!\!\!\!\!\mod 3), \\
            n + 1 & \text{otherwise}.
        \end{cases}
    \]
\end{theorem}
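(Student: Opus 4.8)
The plan is to establish the two-sided estimate $n \le R(T,\mathbb{Z}_3) \le n+1$ in the no-degree-$0\bmod 3$ case and $R(T,\mathbb{Z}_3)=n$ otherwise, by combining Lemma~\ref{r_g_z3_lower}, Proposition~\ref{zero_sum_restrictive_colouring_1}, and a two-fold application of Lemma~\ref{k_weight_lemma} that exploits the separation of the two ASPs. The lower bounds are immediate: if $T$ has no vertex of degree $0\bmod 3$ then $R(T,\mathbb{Z}_3)\ge n+1$ by Lemma~\ref{r_g_z3_lower}(ii), and in general $R(T,\mathbb{Z}_3)\ge n$ trivially (a zero-sum copy needs at least $n$ vertices). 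So the content is the matching upper bound, and the key point is that two separated ASPs let us realise \emph{three} distinct weights for embeddings of $T$ at a suitable vertex, so that $0$ is always among them.

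First I would treat the case where $T$ has a vertex of degree $0 \bmod 3$; here the target is $R(T,\mathbb{Z}_3)\le n$. Given any $f\colon E(K_n)\to\mathbb{Z}_3$, if some vertex $v$ is incident to two distinctly coloured edges, I root $T$ at the vertex $v^\ast$ supplied by the hypothesis (the one with two separated ASPs) and apply Lemma~\ref{k_weight_lemma} \emph{twice}: the two ASPs being separated — either both at $r$, or in distinct subtrees from $r$ — means the local modifications that change the weight (swapping which pendant edge at $v_1$ versus $v_2$ is used, as in the proof of Lemma~\ref{k_weight_lemma}) occur on \emph{edge-disjoint} parts of the embedding and can be performed independently. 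Thus from one base embedding of $T$ at $v$ we obtain four embeddings whose weights are $\alpha$, $\alpha+\delta_1$, $\alpha+\delta_2$, $\alpha+\delta_1+\delta_2$ with $\delta_1,\delta_2\ne 0$ in $\mathbb{Z}_3$; these four values cover all of $\mathbb{Z}_3$ (indeed, $\{0,\delta_1,\delta_2\}=\mathbb{Z}_3$ whenever $\delta_1\ne 0 \ne \delta_2$, regardless of whether $\delta_1=\delta_2$), so a zero-sum copy exists. If on the other hand \emph{no} vertex of $K_n$ is incident to two distinctly coloured edges, then $K_n$ is monochromatic and any copy of $T$ is zero-sum since $3\mid e(T)$. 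This disposes of the first case with bound $n$, which combined with the trivial lower bound gives $R(T,\mathbb{Z}_3)=n$.

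For the case where $T$ has no vertex of degree $0\bmod 3$, I take any $f\colon E(K_{n+1})\to\mathbb{Z}_3$ and seek a zero-sum copy of $T$. If there is a vertex $v$ in $K_{n+1}$ incident to two distinctly coloured edges, the same double application of Lemma~\ref{k_weight_lemma} (now inside $K_{n+1}$, rooting $T$ at $v^\ast$ and identifying $v^\ast$ with $v$, which is legitimate since $T$ has $n\le n+1$ vertices) produces embeddings realising all three weights in $\mathbb{Z}_3$, hence a zero-sum one. If no such vertex exists then $K_{n+1}$ is monochromatic and again any copy of $T$ works. Combined with the lower bound $R(T,\mathbb{Z}_3)\ge n+1$ from Lemma~\ref{r_g_z3_lower}(ii), this gives equality.

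The main obstacle I anticipate is making the ``apply Lemma~\ref{k_weight_lemma} twice independently'' step rigorous: Lemma~\ref{k_weight_lemma} as stated only yields \emph{two} embeddings of differing weight, and its proof branches into three cases depending on the colouring around $v$, so I need to check that when the two ASPs lie in distinct subtrees $T_{r_1}, T_{r_2}$ from $r$ (or are both at $r$), the two weight-altering gadgets can be placed on disjoint vertex sets — one may need to reserve, for each ASP, its own pair $\{v_1,v_2\}$ or $\{u_1,u_2\}$ of distinctly coloured edges, and argue these reservations do not conflict (using that $K_n$, resp.\ $K_{n+1}$, is large enough, and that outside the two gadgets the two embeddings agree and can be completed identically). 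Handling the sub-case where one ASP gadget must be realised via \emph{Case~2/Case~3} of Lemma~\ref{k_weight_lemma} (monochromatic or non-monochromatic residual clique) while the other is realised via \emph{Case~1} will require a little care, but it is a bookkeeping matter rather than a new idea; the essential mechanism is the observation that two independent $\pm$-choices in $\mathbb{Z}_3$ always hit $0$.
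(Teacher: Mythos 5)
There is a genuine gap, and it sits exactly where you flagged the ``main obstacle'' and then dismissed it as bookkeeping. Your case split in the bound-$n$ argument is \emph{monochromatic} versus \emph{some vertex incident to two distinctly coloured edges}, and in the latter case you apply Lemma~\ref{k_weight_lemma} twice. But a double, independent application needs the anchor vertex $u$ to carry \emph{two disjoint pairs} of distinctly coloured incident edges (one pair to feed each ASP gadget); a single bichromatic pair at $u$ does not suffice, because both gadgets would have to consume the same special neighbour. The colourings for which no vertex has two disjoint bichromatic pairs are precisely the \emph{restrictive} colourings of Section~3, and these include non-monochromatic examples — indeed the colouring from Lemma~\ref{r_g_z3_lower}(ii) (all edges at one vertex coloured $1$, the rest $0$) is restrictive, non-monochromatic, and admits no zero-sum copy of $T$ in $K_n$ when $T$ has no vertex of degree $0 \pmod 3$. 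The clearest symptom of the gap is that your first-case argument never invokes the hypothesis that $T$ has a vertex of degree $0 \pmod 3$: if it were sound it would give $R(T,\mathbb{Z}_3)\le n$ for \emph{every} tree with two separated ASPs, contradicting the lower bound $n+1$ that you yourself cite for the second case.

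The paper's proof therefore splits on restrictive versus non-restrictive. In the non-restrictive case there is a vertex $u$ with two disjoint bichromatic pairs $(w,x)$ and $(y,z)$; these are placed into disjoint cliques $G_L\ni w,x$ and $G_R\ni y,z$ (both containing $u$), Lemma~\ref{k_weight_lemma} is applied once in each, and Cauchy--Davenport finishes — this is essentially your two-independent-choices mechanism, made precise. In the restrictive case one cannot run the gadgets at all; instead the paper uses Lemma~\ref{bad_edge_removal} to see that deleting the bad edges leaves a monochromatic graph of minimum degree $n-2$ or a monochromatic $K_1\cup K_{n-1}$, and in the latter sub-case the zero-sum copy is produced by parking a vertex of degree $0\pmod 3$ on the isolated vertex (Proposition~\ref{zero_sum_restrictive_colouring_1}) — this is where the degree hypothesis is genuinely used, and where the $n$ versus $n+1$ dichotomy in the statement actually arises. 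A minor additional point: your parenthetical ``$\{0,\delta_1,\delta_2\}=\mathbb{Z}_3$ whenever $\delta_1\neq 0\neq\delta_2$'' is false when $\delta_1=\delta_2$; the correct statement is that the four values $\{0,\delta_1,\delta_2,\delta_1+\delta_2\}$ cover $\mathbb{Z}_3$, equivalently $|A+B|=3$ for $|A|=|B|=2$, which is what the paper uses.
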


These two theorems together give sharp bounds on $R(T, \mathbb{Z}_3)$ for infinite classes of trees. As a consequence of Theorem \ref{thm1}, we have that given $k_1 \geq 2$ and $k_2 \in \{2, 3\}$ such that $2k_1 + k_2 \equiv 1 \ (\!\!\!\!\mod 3)$, if $T$ is two stars $K_{1, k_1}$ joined by a path $P_{k_2}$ between their centres, then $R(T, \mathbb{Z}_3) \leq n + 1$. 

Likewise, as a consequence of Theorem \ref{thm2}, for $k_2 \geq 4$, then $R(T, \mathbb{Z}_3) = n$ if $k_1 \equiv 2 \ (\!\!\!\!\mod 3)$ and $R(T, \mathbb{Z}_3) = n + 1$ otherwise. This is exemplified in Figure \ref{fig6}.

\begin{figure}[h!]
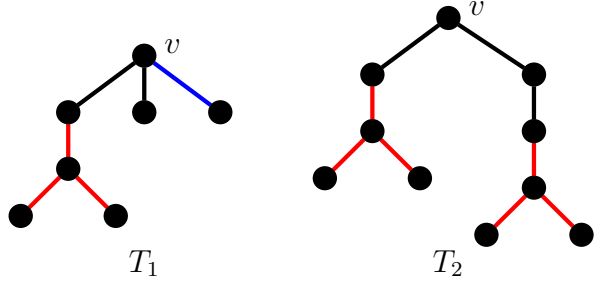

	\ctikzfig{thm_19_20}
	\vspace*{-4mm}
	\caption{Trees $T_1$ and $T_2$ as in Theorems \ref{thm1} and \ref{thm2} respectively. The ASPs in the subtrees from $v$ are highlighted in red, whilst the leaf required in Theorem \ref{thm1} is highlighted in blue.}
	\label{fig6}
	\end{figure}

\begin{proof}[\textbf{Proof of Theorem \ref{thm1}}]
	Firstly note that since $v$ is not a leaf, then there must be some sub-tree from $v$, distinct from the one containing the ASP, which contains a leaf. In particular, we can choose the vertex $v$ such that we have a leaf incident to $v$ and the ASP in the remainder.

	The smallest such tree must have at least $7$ vertices and is not isomorphic to a star rooted at $v$. Consider an edge-colouring of $K_{n+1}$ using $\mathbb{Z}_3$. If this colouring is restrictive, then by Proposition \ref{zero_sum_restrictive_colouring_1} it immediately follows that $R(T, \mathbb{Z}_3) \leq n + 1$.
	
	Otherwise suppose that this edge-colouring is not restrictive. Then there exists some vertex $u$ and at least two pairs of vertices, say $(w, x)$ and $(y, z)$, such that the edges $\{u, w\}$ and $\{u, x\}$ are coloured distinctly, and similarly for $\{u, y\}$ and $\{u, z\}$.
	
	Let $T'$ be the sub-tree of $T$ obtained by considering the component in $T - v$ containing the ASP and adding $v$. Let $l + 1$ be the number vertices in $T'$. Consider a partition of $V(K_{n+1}) \backslash \{y, z\}$ intro two sets $V'$ and $V_0$ of sizes $l$ and $n - l - 1$ respectively, such that $u \in V_0$ and $w, x \in V'$. 
	
	Let $G'$ be the coloured subgraph induced by $V' \cup \{u\}$, isomorphic to $K_{l+1}$. Since in $G'$ there are two distinctly coloured edges incident to $u$, by Lemma \ref{k_weight_lemma} there are two embeddings of $T'$ in $G'$, with $v$ identified with $u$, having distinct weight. Moreover, for a chosen leaf incident to $v$ in $T$, there are two distinctly weighted ways to embed it in $K_{n + 1}$, namely using the differently weighted edges $\{u, y\}$ and $\{u, z\}$. 
	
	The remainder of the tree, having $n - l - 1$ distinct vertices in total, may be embedded in the coloured subgraph $G_0$ induced by $V_0$, isomorphic to $K_{n - l - 1}$, with $v$ in the remainder identified with $u$ in $G_0$. Let $w_0$ be the weight of this embedding of $T_0$. Recall that if $A$ and $B$ are two subsets of $\mathbb{Z}_3$ having size $2$ each, by the simplest form of the Cauchy-Davenport Theorem we have that $|A + B| = 3$ \textit{i.e.} $A + B = \mathbb{Z}_3$. Hence, we can find a copy of $T'$ in $G'$ and a copy of the chosen leaf incident to $v$ such that the sum of their weights is $-w_0$. 
	
	Consequently, there is a zero-sum copy of $T$ in $K_{n+1}$ and the result follows.
\end{proof}

\begin{proof}[\textbf{Proof of Theorem \ref{thm2}}]
	Consider an edge-colouring of $K_n$ by $\mathbb{Z}_3$ which is not restrictive. Then there exists some vertex $u$ and at least two pairs of vertices, say $(w, x)$ and $(y, z)$, such that the edges $\{u, w\}$ and $\{u, x\}$ are coloured distinctly, and similarly for $\{u, y\}$ and $\{u, z\}$. 
	
	Now, let $T_L$ and $T_R$ be two trees obtained from considering any two distinct sub-trees from $v$ containing an ASP and adding the unique edge to $v$ in each. Let $l + 1$ and $r + 1$ be the number of vertices in $T_L$ and $T_R$ respectively.
	
	Consider a partition of $V(K_n)$ into three sets $V_L$, $V_R$ and $V_0$ of sizes $l$, $r$ and $n - l - r$, respectively, such that $u \in V_0$. Furthermore, let this partition be such that $w$ and $x$ are in $V_L$, whilst $y$ and $z$ are in $V_R$. Let $G_L$ be the coloured subgraph induced by $V_L \cup \{u\}$, isomorphic to $K_{l+1}$. Since in $G_L$ there are two distinctly coloured edges incident to $u$, by Lemma \ref{k_weight_lemma} there are two embeddings of $T_L$ in $G_L$, with $v$ identified with $u$, having distinct weight. Similarly, let $G_R$ be the coloured subgraph induced by $V_R \cup \{u\}$, isomorphic to $K_{r+1}$. Then there are two embeddings of $T_R$ in $G_R$, with $v$ identified with $u$, having distinct weight. 
	
	The remainder of the tree $T$, having $n - l - r$ distinct vertices in total, may be embedded in the coloured subgraph $G_0$ induced by $V_0$, isomorphic to $K_{n - l - r}$, with $v$ in the remainder identified with $u$ in $G_0$. Let $w_0$ be the weight of this embedding of $T_0$. Recall that if $A$ and $B$ are two subsets of $\mathbb{Z}_3$ having size $2$ each, by the simplest form of the Cauchy-Davenport Theorem we have that $|A + B| = 3$ \textit{i.e.} $A + B = \mathbb{Z}_3$. Hence, we can find a copy of $T_L$ in $G_L$ and a copy of $T_R$ in $G_R$ such that the sum of their weights is $-w_0$. 

    Otherwise suppose that our colouring of $K_n$ is a restrictive colouring using $\mathbb{Z}_3$. Note that by the condition that there are at least two ASPs and $n \equiv 1 \ (\!\!\!\!\mod 3)$, then $T$ is not isomorphic to $K_{1, n-1}$ and $n \geq 7$. 
    
    By Proposition \ref{zero_sum_restrictive_colouring_1}, if $T$ has a vertex of degree $0 \ (\!\!\!\!\mod 3)$ then there is a zero-sum embedding of $T$ and combined with the previous case, we have that $R(T, \mathbb{Z}_3) = n$. Else by Lemma \ref{r_g_z3_lower} and Remark \ref{rem1}, if $T$ does not have a vertex of degree $0 \ (\!\!\!\!\mod 3)$, there exists such colourings such that there is no zero-sum embedding of $T$, however by Proposition \ref{zero_sum_restrictive_colouring_1} every such colouring of $K_{n+1}$ has a zero-sum embedding of $T$. Combined with the previous case, we have that $R(T, \mathbb{Z}_3) = n + 1$.
 \end{proof}
 
 \section{Concluding remarks}
 
 We have seen that for every $2$-good graph $G$ on $n$ vertices with $3 \vert e(G)$, then $R(G, \mathbb{Z}_3) \leq n + 2$, and in particular for a tree $T$ not isomorphic to a star it follows that $R(T, \mathbb{Z}_3) \leq n + 2$, matching the upper-bound implied by Conjecture \ref{conj_tree_gen} for $k = 3$. More generally, for $k \geq 3$ and a tree $T$ such that $k \vert e(T)$, Conjecture \ref{conj_tree_gen} implies that $R(T, \mathbb{Z}_k) \leq n + k - 1$.  
 
 This motivates the following problem.
 
 \begin{problem}
 	Let $k \in \mathbb{N}, k \geq 3$. A graph $G$ is a said to be a $(k-1)$-good graph if it has at least $k - 1$ vertices of degree $1$ that are pairwise a distance at least three apart. 
 	
 	If $G$ is a $(k-1)$-good graph such that $k \vert e(G)$, is $R(G, \mathbb{Z}_k) \leq n + k - 1$?
 \end{problem}

\begin{theorem} \label{r_g_zp_upper}
Let $p$ be a prime such that $p \geq 3$. If $G$ be a $(p - 1)$-good graph on $n$ vertices with $p \vert e(G)$, then $R(G, \mathbb{Z}_p) \leq n + 3p - 6$.
\end{theorem}

\begin{proof}
    Let $m = n + 3p - 6$ and consider an edge-colouring $f$ of $K_m$ using $\mathbb{Z}_p$. We proceed by cases on the number of vertex disjoint copies of $K_{1, 2}$ in $K_m$ such that the two edges of each copy are coloured different under $f$.

    \textit{\textbf{Case 1:}} Suppose that there are at most $p - 2$ such vertex disjoint copies of $K_{1, 2}$ with the desired property. Removing them from $K_m$, we are left with at least an $n$-clique, since we removed at most $3(p-2)$ vertices. 

    In this $n$-clique, with the colouring $f$ restricted to it, all edges must be coloured the same, as otherwise there exists a vertex $v$ with two neighbours $u, w$ in this clique such that $f(\{v, u\}) \neq f(\{v, w\})$, and hence we have another copy of $K_{1, 2}$ with the desired property that is disjoint from all other $(p - 2)$ copies --- a contradiction. 
    
    Hence we have a monochromatic $n$-clique in $K_m$ coloured using $f$, and any embedding of $G$ in this clique yields a zero-sum copy of $G$ since $p \vert e(G)$.

    \textit{\textbf{Case 2:}} Otherwise suppose that there are at least $p - 1$ vertex disjoint copies of $K_{1,2}$ with the desired property. Consider $p - 1$ of these copies, and for all $1 \leq j \leq p - 1$, let $v_j$ be the centre and $v_{j, 1}, v_{j, 2}$ be the leaves of copy $j$. Under $f$, we have that $f(\{v_j, v_{j, 1}\}) \neq f(\{v_j, v_{j, 2}\})$ for all $1 \leq j \leq p - 1$.

    Since $G$ is a $(p - 1)$-good graph, there are $p - 1$ distinct vertices $w_1, \dots, w_{p - 1}$ adjacent to a leaf, say $l_1, \dots, l_{p -1}$ respectively. Let $G^\ast = G \backslash \{l_1, \dots, l_{p - 1}\}$. Observe that $|V(G^*)| = n - p + 1$.

    Consider $K_m$ with $v_{j, 1}, v_{j, 2}$ removed for all $1 \leq j \leq p - 1$; then we are left with a clique on $m - 2(p - 1) = n + p - 4$. Since $n + p - 4 \geq n - p + 1 = |V(G^*)|$ for all $p \geq 3$, we can embed $G^*$ in this clique such that every $w_j$ in $G^*$ is mapped onto $v_j$ in the clique for all $1 \leq j \leq p - 1$. Let $w(G^*)$ be the weight of this embedding.

    Extending this to an embedding of $G$ in $K_m$, for all $1 \leq j \leq p - 1$, we have at least two distinctly coloured edges with which we can add the leaf $l_j$, namely the edges $\{v_j, v_{j, 1}\}$ and $\{v_j, v_{j, 2}\}$. Let $A_j = \{f(\{v_j, v_{j, 1}\}), \{v_j, v_{j, 2}\}\}$ for all $1 \leq j \leq p - 1$ and note that $|A_j| = 2$. By the general form of the Cauchy-Davenport Theorem, we have that $|A_1 + A_2 + \dots + A_{p - 1}| \geq \min\{p, 2(p - 1) - (p - 1) + 1\} = p$. 
    
    Then we can find a choice of embedding for each of the $p - 1$ leaves such that the weight of the chosen edges is $-w(G^*)$, and extending the embedding of $G^*$ by these edges yields an embedding of $G$ with weight $w(G^*) - w(G^*) = 0$. The result follows.
\end{proof}

Since the submission of this paper, Conjecture \ref{conj_tree} was (slightly corrected and) fully proved in \cite{AlvaradoColucciParente}.
 
\bibliographystyle{plain}
\bibliography{zs_trees_ref}

\end{document}